\documentclass[english,letterpaper,11pt,reqno]{amsart}
\setlength{\parindent}{0pt}  
\usepackage[backend=bibtex,style=alphabetic,maxnames=6]{biblatex}
\addbibresource{Weyl_tensor.bib}
\usepackage{appendix}
\usepackage{amsbsy}
\usepackage{scalerel}
\usepackage{tcolorbox}
\usepackage{amsfonts}
\usepackage{amsmath}
\usepackage{amssymb}
\usepackage{amsthm}
\usepackage{graphicx}
\usepackage{ifthen}
\usepackage{textcomp}
\usepackage{enumitem, color, amssymb}
\usepackage{dsfont}
\usepackage{mathrsfs}
\usepackage{calrsfs}
\usepackage[bookmarksnumbered,colorlinks]{hyperref}
\hypersetup{colorlinks=true, linkcolor=blue, citecolor=blue}
\emergencystretch20pt   
\usepackage{cancel}
\usepackage{setspace}

\newcommand{\lra}{\longrightarrow}

\newcommand{\RR}{\mathbb{R}}

\newcommand{\vepo}{\epsilon_{\scalebox{0.4}{\emph{L}}}}

\makeatletter
\newcommand*{\defeq}{\mathrel{\rlap{%
                     \raisebox{0.25ex}{$\m@th\cdot$}}%
                     \raisebox{-0.25ex}{$\m@th\cdot$}}%
                     =}
\makeatother

\makeatletter
\newcommand*\owedge{\mathpalette\@owedge\relax}
\newcommand*\@owedge[1]{%
  \mathbin{%
    \ooalign{%
      $#1\m@th\bigcirc$\cr
      \hidewidth$#1\m@th\wedge$\hidewidth\cr
    }%
  }%
}
\makeatother

\newtheorem{thm}{Theorem}

\newtheorem{cor}{Corollary}

\newtheorem{defn}{Definition}
\newtheorem{prop}{Proposition}
\newtheorem*{definition-non}{Definition}
\newtheorem*{theorem-non}{Theorem}
\newtheorem*{proposition-non}{Proposition}
\newtheorem*{lemma-non}{Lemma}
\newtheorem*{corollary-non}{Corollary}

\newcommand{\beqa}{\begin{eqnarray}}
\newcommand{\beq}{\begin{equation}}
\newcommand{\eeqa}{\end{eqnarray}}
\newcommand{\eeq}{\end{equation}}

\newcommand\ipl[2]{\langle {#1},{#2}\rangle_{\!g_{\scalebox{0.3}{\emph{L}}}}}
\newcommand\ipr[2]{\langle {#1},{#2}\rangle_{\!\scalebox{0.7}{\emph{g}}}}


\newcommand\ww[2]{#1 \wedge #2}

\newcommand\cd[2]{\nabla_{\!#1}{#2}}

\newcommand\gL{g_{\scalebox{0.4}{$L$}}}
\newcommand\ggL{{\bf g}_{\scalebox{0.4}{$L$}}}

\newcommand\Ric{\text{Ric}}

\newcommand\Rmr{\text{Rm}}

\newcommand\WL{W_{\scalebox{0.4}{$L$}}}

\newcommand\comma{\hspace{.2in},\hspace{.2in}}
\newcommand\commas{\hspace{.1in},\hspace{.1in}}
\newcommand\commass{\hspace{.05in},\hspace{.05in}}

\newcommand\hsl{*_{\scalebox{0.4}{$L$}}}

\newcommand\hsr{*}

\newcommand\cow{\hat{W}}
\newcommand\coww{\hat{W}^{\scalebox{0.55}{$\gL$}}}
\newcommand\cowT{\hat{W}_{\scalebox{0.4}{$L$}}^{\scalebox{0.5}{$g$}}}
\newcommand\cx{\Lambda_{\scalebox{0.5}{\emph{$\mathbb{C}$}}}^2}

\newcommand\co{\hat{R}}

\newcommand\Wsec{\text{sec}_{\scalebox{0.5}{\emph{\emph{$\coww$}}}}}
\newcommand\WTsec{\text{sec}_{\scalebox{0.5}{\emph{\emph{$\cowT$}}}}}

\providecommand{\customgenericname}{}
\newcommand{\newcustomtheorem}[2]{%
  \newenvironment{#1}[1]
  {%
   \renewcommand\customgenericname{#2}%
   \renewcommand\theinnercustomgeneric{##1}%
   \innercustomgeneric
  }
  {\endinnercustomgeneric}
}

\newcustomtheorem{customthm}{Theorem}
\newcustomtheorem{customlemma}{Lemma}


\begin{document}
\title[]{Petrov Types for the Weyl tensor via the Riemannian-to-Lorentzian Bridge}
\author[]{Amir Babak Aazami}
\address{Clark University\hfill\break\indent
Worcester, MA 01610}
\email{aaazami@clarku.edu}

\maketitle
\begin{abstract}
We analyze oriented Riemannian 4-manifolds whose Weyl tensors $W$ satisfy the conformally invariant condition $W(T,\cdot,\cdot,T) = 0$ for some nonzero vector $T$.  While this can be algebraically classified via $W$'s normal form, we find a further geometric classification by deforming the metric into a Lorentzian one via $T$. We show that such a $W$ will have the analogue of Petrov Types from general relativity, that only Types I and D can occur, and that each is completely determined by the number of critical points of $W$'s associated Lorentzian quadratic form. A similar result holds for the Lorentzian version of this question, with $T$ timelike.
\end{abstract}

\section{Introduction}
The Weyl curvature tensor $W$ of an oriented Riemannian 4-manifold, being trace-free, is well known to possess a \emph{normal form}: Relative to any orthonormal basis, the linear endomorphism of $W$ on $\Lambda^2$ always has the block form ${\tiny \begin{bmatrix}
A & B\\
B & A
\end{bmatrix}}$. The $3 \times 3$ matrices $A,B$ are symmetric, and by the classical work of \cite{berger,thorpe2}, there exist bases relative to which $A,B$ are diagonal and completely determined by just the critical points and values of $W$'s associated quadratic form on $\Lambda^2$. (If the metric was Einstein, then this would be true with the full curvature 4-tensor in place of $W$, and the quadratic form would be the sectional curvature.)  Given this normal form for $W$, it is thus natural to investigate additional forms of symmetry. One may, e.g., analyze the case when $A = \pm B$, and this is well known to occur if and only if the 4-manifold is \emph{self-dual} or \emph{anti-self-dual}. There are many such 4-manifolds (see, e.g., \cite{lebrun2,taubes}), and anti-self-dual 4-manifolds in particular are related to R.~Penrose's twistor program \cite{besse,lebrun}.
\vskip 6pt
Here we analyze another condition on $W$'s normal form, namely, the condition that $A=O$ in at least one orthonormal basis. Denoting that ordered basis by $\{e_1,e_2,e_3,e_4\}$, that $A=O$ in this basis is equivalent to $W(e_1,\cdot,\cdot,e_1) = 0$, which is conformally invariant.  After providing some examples and non-examples of such 4-manifolds, we proceed to classifying them. To begin with, in any basis $\{e_i\}$, if some $T = \sum_i c_ie_i$ satisfies $W(T,\cdot,\cdot,T) = 0$, then the equations $W(T,e_j,e_k,T) = 0$ yield an overdetermined system for the $c_i$'s. If one takes $\{e_i\}$ to be a basis that diagonalizes $W$ \`a la \cite{berger,thorpe2}, then these equations become more tractable (Theorem \ref{thm:alg}), and certain consequences can be drawn. However, to approach this problem in a more geometric, basis-independent manner, it is best to first change the metric. Namely, if $W(T,\cdot,\cdot,T) = 0$ for some metric $g$ and unit-length vector field $T$, then $W$ will also be trace-free with respect to the \emph{Lorentzian} metric $\gL \defeq g - 2T^\flat \otimes T^\flat$. If one then considers $W$'s $\gL$-induced endomorphism on $\Lambda^2$, then it will have $\gL$-normal forms, analogous to the Petrov Types of general relativity \cite{Petrov,o1995,stephani}. Indeed, just as \cite{thorpe} showed how the five possible Petrov Types are determined by the number of critical points of the spacetime's sectional curvature function, in Theorems \ref{thm:n2} and \ref{thm:onlyI} we prove something similar: $W$ can have only two $\gL$-normal forms, and they are determined by the number of critical points of its associated $\gL$-quadratic form:
\begin{theorem-non}
Let $(M,g)$ be an oriented Riemannian 4-manifold whose Weyl tensor $W$ satisfies $W(T,\cdot,\cdot,T) = 0$ for some unit-length vector field $T$ on $M$. Set $\gL \defeq g - 2T^{\flat}\otimes T^{\flat}$ and let $\coww\colon\Lambda^2\lra \Lambda^2$ denote the linear endomorphism of $W$ defined via the $\gL$-induced Lorentzian inner product \emph{$\ipl{\,}{}$} on $\Lambda^2$\emph{:}
$$
\text{\emph{$\ipl{\coww(v\wedge w)}{x\wedge y} \defeq W(v,w,x,y)$}}\hspace{.2in}\text{for all $v,w,x,y \in T_pM$}.
$$
Then at each $p \in M$, the $\gL$-quadratic form of $W$ given by \eqref{def:T_seccoo1} always has either $3$ or $\infty$ critical points, which number completely determines its two possible $\gL$-normal forms \emph{(}Petrov Types\emph{)} as defined in Definition \ref{def:Ptype}.
\end{theorem-non}

In other words, there is basis-independent geometry in the condition $A=O$ that began our inquiry\,---\,though we point out that, unlike in the Riemannian case, the critical points and values here do not determine the invariants of $W$'s $\gL$-normal form (i.e., the analogues of the entries in $A,B$); for that, one would also need the Hessian of \eqref{def:T_seccoo1}. Also, $T$ needn't be unique here, either. Nevertheless, Theorems \ref{thm:n2} and \ref{thm:onlyI} show the continued presence of normal forms in dimension four, a presence that is already rich: From their use in the proof of the Hitchin-Thorpe inequality \cite{thorpe3,hitchin}), to their relation to the Petrov Types of general relativity, and more recently to extensions of the curvature tensor, e.g., in gradient Ricci solitons \cite{cao}.

\vskip 6pt
Finally, we reverse the roles of $g$ and $\gL$, by considering when the Weyl tensor $\WL$ of an oriented Lorentzian 4-manifold $(M,\gL)$ satisfies $\WL(T,\cdot,\cdot,T) = 0$ for some unit-length \emph{timelike} $T$. By passing over to the Riemannian metric $g \defeq \gL+2T^{\flat}\otimes T^{\flat}$, we use \cite{berger,thorpe2} to prove in Theorem \ref{thm:n2R} the following:
\begin{theorem-non}
Let $(M,\gL)$ be an oriented Lorentzian 4-manifold whose Weyl tensor $\WL$ satisfies
$\WL(T,\cdot,\cdot,T) = 0$
for some unit-length timelike vector field $T$ on $M$. Then $(M,\gL)$ cannot have Petrov Types II, N, or III. Set $g \defeq \gL + 2T^{\flat}\otimes T^{\flat}$ and let $\cowT\colon\Lambda^2\lra \Lambda^2$ denote the linear endomorphism of $\WL$ defined via the $g$-induced Riemannian inner product \emph{$\ipr{\,}{}$} on $\Lambda^2$\emph{:}
$$
\text{\emph{$\ipr{\cowT(v\wedge w)}{x\wedge y} \defeq \WL(v,w,x,y)$}}\hspace{.2in}\text{for all $v,w,x,y \in T_pM$}.
$$
Then at each $p\in M$, $\WL$ is completely determined by the critical points and values of $\WL$'s $g$-quadratic form \emph{$P \mapsto \ipr{\cowT P}{P}$}, defined for all 2-planes $P$.
\end{theorem-non}

In contrast to the Petrov Types of general relativity, whose invariants cannot be determined by just the first derivatives of their quadratic forms (see \cite{thorpe}; their Hessian is also needed), Theorem \ref{thm:n2R} shows that those Weyl tensors that satisfy $\WL(T,\cdot,\cdot,T) = 0$ \emph{can} be determined by just the first derivatives\,---\,of a $g$-induced quadratic form.  Finally, we remark that the proofs of Theorems \ref{thm:n2} and \ref{thm:n2R} are completely classical; indeed, they follow those of \cite{thorpe,thorpe2} step-by-step\,---\,but with one important difference: The Weyl endomorphism originates from $g$, whereas the Hodge star operator with which it commutes originates from $\gL$, and vice versa. As such, this article follows an ongoing line of inquiry begun in \cite{aazami}, namely, that of studying the commutativity of curvature operators and Hodge stars arising from \emph{different} metrics over the same 4-manifold $M$.

\section{Brief review of the Weyl tensor}
\label{sec:review}
In this section we briefly review the Weyl tensor and establish our notation. Let $(M,g)$ be an oriented Riemannian 4-manifold, with Riemann curvature 4-tensor $\Rmr(v,w,x,y) = g(\cd{v}{\cd{w}{x}}-\cd{w}{\cd{v}{x}}-\cd{[v,w]}{x},y)$. Recall that $g$'s \emph{curvature operator} $\co\colon \Lambda^2 \lra \Lambda^2$ is the linear endomorphism defined by  
\beqa
\label{eqn:co0}
\ipr{\co(v\wedge w)}{x\wedge y} \defeq \Rmr(v,w,x,y)\hspace{.2in}\text{for all $v,w,x,y \in T_pM$},
\eeqa
where $\ipr{\,}{}$ is the $g$-induced inner product $\ipr{\,}{}$ on $\Lambda^2$:
\beqa
\label{eqn:gonm}
\ipr{\ww{v}{w}}{\ww{x}{y}}  \defeq \text{det}\begin{bmatrix}
g(v,x) & g(v,y)\\
g(w,x) & g(w,y)
\end{bmatrix}\cdot
\eeqa
(If \eqref{eqn:co0} is defined with a minus sign, then $\ipr{\co(v\wedge w)}{v\wedge w} = \Rmr(v,w,w,v)$ would be the sectional curvature of the 2-plane spanned by the orthonormal pair $v,w$; \cite{thorpe,thorpe2} define it as we have.) Note that the pairwise symmetry $\Rmr_{ijkl}=\Rmr_{klij}$ ensures that $\co$ is $\ipr{\,}{}$-self-adjoint. Now, any orthonormal basis $\{e_1,e_2,e_3,e_4\} \subseteq T_pM$ lifts to a $\ipr{\,}{}$-orthonormal basis
\beqa
\label{eqn:Hbasis0}
\{\ww{e_1}{e_2}\,,\,\ww{e_1}{e_3}\,,\,\ww{e_1}{e_4}\,,\, \ww{e_3}{e_4}\,,\, \ww{e_4}{e_2}\,,\, \ww{e_2}{e_3}\} \subseteq \Lambda^2.
\eeqa
Relative to this basis, and writing $\Rmr(e_i,e_j,e_k,e_l) \defeq R_{ijkl}$, the curvature operator \eqref{eqn:co0} takes the block form
\beqa
\label{eqn:co1}
\co = \begin{bmatrix}
R_{1212} & R_{1312} & R_{1412} & R_{3412} & R_{4212} & R_{2312}\\
R_{1213} & R_{1313} & R_{1413} & R_{3413} & R_{4213} & R_{2313}\\
R_{1214} & R_{1314} & R_{1414} & R_{3414} & R_{4214} & R_{2314}\\
R_{1234} & R_{1334} & R_{1434} & R_{3434} & R_{4234} & R_{2334}\\
R_{1242} & R_{1342} & R_{1442} & R_{3442} & R_{4242} & R_{2342}\\
R_{1223} & R_{1323} & R_{1423} & R_{3423} & R_{4223} & R_{2323}
\end{bmatrix} = \begin{bmatrix}
A & B\\
B^t & D
\end{bmatrix},
\eeqa
with $A$ and $D$ symmetric $3 \times 3$ matrices and $B^t$ the transpose of the $3 \times3$ matrix $B$, which is not symmetric in general. The second linear operator we shall need is the \emph{Hodge star operator} $\hsr\colon \Lambda^2\lra \Lambda^2$, defined via
$$
\ww{\xi}{\hsr\eta} \defeq \ipr{\xi}{\eta}\,dV \comma \xi,\eta \in \Lambda^2,
$$
where $dV \in \Lambda^4$ is the orientation form. The action of $\hsr$ on the basis \eqref{eqn:Hbasis0} is given by
\beqa
\arraycolsep=1.4pt\def\arraystretch{1.5}
\left\{\begin{array}{lr}
\hsr(\ww{e_1}{e_2}) = \ww{e_3}{e_4},\\
\hsr(\ww{e_1}{e_3}) = \ww{e_4}{e_2},\\
\hsr(\ww{e_1}{e_4}) = \ww{e_2}{e_3},
\end{array}\right.  \comma \arraycolsep=1.4pt\def\arraystretch{1.5}
\left\{\begin{array}{lr}
\hsr(\ww{e_3}{e_4}) = \ww{e_1}{e_2},\\
\hsr(\ww{e_4}{e_2}) = \ww{e_1}{e_3},\\
\hsr(\ww{e_2}{e_3}) = \ww{e_1}{e_4},\label{eqn:*1}
\end{array}\right.
\eeqa
or in block matrix form,
\beqa
\label{eqn:HbasisR}
\hsr = \begin{bmatrix}
O & I\\
I & O
\end{bmatrix},
\eeqa
where $O$ and $I$ are the $3\times 3$ zero and identity matrices, respectively. Observe that $\hsr$ has the eigenvalues $\pm 1$, whose eigenbases $\Lambda^\pm$, which decompose $\Lambda^2$ into the direct sum $\Lambda^2=\Lambda^+\oplus \Lambda^-$, are given in terms of \eqref{eqn:Hbasis0} by
$$
\Lambda^{\pm} \defeq \text{span}\Big\{\frac{1}{\sqrt{2}}(\ww{e_1}{e_2} \pm \ww{e_3}{e_4}),\frac{1}{\sqrt{2}}(\ww{e_1}{e_3} \pm \ww{e_4}{e_2}),\frac{1}{\sqrt{2}}(\ww{e_1}{e_4} \pm \ww{e_2}{e_3})\Big\}\cdot
$$
Eigenvectors $\xi \in \Lambda^+$ are called \emph{self-dual} ($\hsr \xi = +\xi$), while $\xi \in \Lambda^-$ are \emph{anti-self-dual} ($\hsr \xi = -\xi$). We now briefly present the \emph{Weyl curvature tensor} of a Riemannian 4-manifold $(M,g)$. It is defined by
\beqa
\label{eqn:Weyl_form}
W \defeq \Rmr - \frac{1}{2}\Ric\,{\tiny \owedge}\,g + \frac{\text{scal}_{\scalebox{0.6}{\emph{g}}}}{12}g\,{\tiny \owedge}\,g,
\eeqa
where ${\tiny \owedge}$ is the Kulkarni-Nomizu product (see \cite[p.~213]{Lee}), $\Ric$ is the Ricci tensor of $g$, and $\text{scal}_{\scalebox{0.6}{\emph{g}}}$ its scalar curvature.  The most important facts about $W$ is that it is trace-free and that it is the conformally invariant part of the Riemann curvature 4-tensor. The latter means that for any metric $\tilde{g} \defeq e^{2f}g$ in the conformal class of $g$, its Weyl tensor $\widetilde{W}$ scales as $\widetilde{W} = e^{2f}W$, and furthermore that $W = 0$ is precisely the condition required for $g$ to be locally conformally flat. Just as with \eqref{eqn:co0}, $W$ also has a corresponding linear endomorphism $\hat{W}\colon\Lambda^2 \lra \Lambda^2$, defined by
\beqa
\label{eqn:coW}
\ipr{\hat{W}(v\wedge w)}{x\wedge y} \defeq W(v,w,x,y)\hspace{.2in}\text{for all $v,w,x,y \in T_pM$}.
\eeqa
Because $W$ is trace-free and otherwise satisfies the same symmetry properties as the Riemann curvature 4-tensor, relative to any $\ipr{\,}{}$-orthonormal basis of the form \eqref{eqn:Hbasis0} for $\Lambda^2$, $\cow$ has the block form
\beqa
\label{eqn:WW}
\cow = \begin{bmatrix}
W_{1212} & W_{1312} & W_{1412} & W_{3412} & W_{4212} & W_{2312}\\
W_{1213} & W_{1313} & W_{1413} & W_{3413} & W_{4213} & W_{2313}\\
W_{1214} & W_{1314} & W_{1414} & W_{3414} & W_{4214} & W_{2314}\\
W_{1234} & W_{1334} & W_{1434} & W_{3434} & W_{4234} & W_{2334}\\
W_{1242} & W_{1342} & W_{1442} & W_{3442} & W_{4242} & W_{2342}\\
W_{1223} & W_{1323} & W_{1423} & W_{3423} & W_{4223} & W_{2323}
\end{bmatrix} = \begin{bmatrix}
A & B\\
B & A
\end{bmatrix}\cdot
\eeqa
(I.e., $W_{1212} = W_{3434}$, etc.) A glance at \eqref{eqn:HbasisR} shows that, as a consequence of being trace-free, $\hat{W}$ commutes with $\hsr$, an important fact to which we shall return in Section \ref{sec:Riem} below.  In terms of $\co$ and $\hsr$, $\hat{W}$ can also be expressed as
\beqa
\label{eqn:Wnew}
\hat{W} = \frac{1}{2}(\co + \hsr \circ \co \circ \hsr) + \frac{\text{scal}_{\scalebox{0.6}{$g$}}}{12}I,
\eeqa
where $I\colon \Lambda^2\lra \Lambda^2$ is the identity operator.  Using \eqref{eqn:Wnew} and the fact that $\hsr$ is $\ipr{\,}{}$-self-adjoint, it follows easily that the eigenspaces $\Lambda^{\pm}$ are $\hat{W}$-invariant. Thus, along with $\Lambda^2 = \Lambda^+\oplus \Lambda^-$, there is a decomposition $\hat{W} = W^+ \oplus W^-$ into endomorphisms $W^{\pm}\colon\Lambda^{\pm}\lra \Lambda^{\pm}$, respectively. An oriented Riemannian 4-manifold is said to be \emph{self-dual} if $W^- = 0$ and \emph{anti-self-dual} if $W^+ = 0$. In terms of \eqref{eqn:WW}, $W^{\pm}=0 \!\!\iff\!\! A = \pm B$ (see \eqref{eqn:check1}-\eqref{eqn:check2} below), which, recalling $\widetilde{W} = e^{2f}W$, makes it clear that these are conformally invariant conditions. The final map that we will need is \emph{$W$'s associated quadratic form}. To define it, let $G_p \subseteq \Lambda^2$ denote the set of decomposable 2-vectors of $\ipr{\,}{}$-length 1; i.e., all 2-vectors $\xi \in \Lambda^2$ of the form $\xi = \ww{v}{w}$ for some $v,w \in T_pM$, with $\ipr{\xi}{\xi} = 1$; these are also called \emph{2-planes at $T_pM$}. These, too, can be characterized by $\hsr$ in dimension four: $G_p = \{\xi^++\xi^- : \xi^{\pm} \in \Lambda^{\pm}, \ipr{\xi^{\pm}}{\xi^{\pm}}=1/2\}$; see, e.g., \cite{thorpe2}. The quadratic form of $\hat{W}$ is then the  map defined by
\beqa
\label{eqn:qaudW}
P \mapsto \ipr{\hat{W}P}{P} \comma P \in G_p.
\eeqa
(Notice its relation to \eqref{eqn:coW}, exactly as sectional curvature related to the curvature operator \eqref{eqn:co0}.) This map, and its Lorentzian analogue in \cite{thorpe} (a variant of which will appear in Definition \ref{def:T_sec0coo1} below), will play a fundamental role in our study of the condition $W(T,\cdot,\cdot,T)=0$, to which we now turn.

\section{The condition $W(T,\cdot,\cdot,T) = 0$}
\label{sec:Riem}
We now proceed to our objects of study, oriented Riemannian 4-manifolds $(M,g)$ whose Weyl tensors $W$ satisfy the pointwise condition
\beqa
\label{eqn:main}
\text{$W(T,\cdot,\cdot,T) = 0 \comma$ some unit $T \in T_pM$}.
\eeqa
This is a conformally invariant condition, trivial $g$ is locally conformally flat. For an example that is not locally conformally flat (or Einstein), consider the metric $g$ defined on $M \defeq \{(r,x,y,z) \in \RR^4 : x > 0\}$ by
$$
g \defeq (2x)^3(dr)^2 + (dx)^2+(2x)^{-3}(dy)^2+(dz)^2.
$$
The components of the Weyl tensor of $g$, relative to the orthonormal basis $\{e_1,e_2,e_3,e_4\} \defeq \{\partial_r/\sqrt{8x^3},\partial_x,\sqrt{8x^3}\partial_y,\partial_z\}$, are given by

{\tiny \beqa
\cow = \begin{bmatrix}
W_{1212} & W_{1312} & W_{1412} & W_{3412} & W_{4212} & W_{2312}\\
W_{1213} & W_{1313} & W_{1413} & W_{3413} & W_{4213} & W_{2313}\\
W_{1214} & W_{1314} & W_{1414} & W_{3414} & W_{4214} & W_{2314}\\
W_{1234} & W_{1334} & W_{1434} & W_{3434} & W_{4234} & W_{2334}\\
W_{1242} & W_{1342} & W_{1442} & W_{3442} & W_{4242} & W_{2342}\\
W_{1223} & W_{1323} & W_{1423} & W_{3423} & W_{4223} & W_{2323}
\end{bmatrix} = \frac{3}{2x^2}\!\begin{bmatrix}
0 & 0 & 0 & 0 & 0 & 0\\
0 & -1 & 0 & 0 & 0 & 0\\
0 & 0 & 1 & 0 & 0 & 0\\
0 & 0 & 0 & 0 & 0 & 0\\
0 & 0 & 0 & 0 & -1 & 0\\
0 & 0 & 0 & 0 & 0 & 1
\end{bmatrix}\cdot\label{eqn:*}
\eeqa}

It can be verified that $T \defeq \frac{1}{\sqrt{2}}(e_1\pm e_2),\frac{1}{\sqrt{2}}(e_3\pm e_4)$ all satisfy $W(T,\cdot,\cdot,T) = 0$ on $M$. On the other hand, here are two classes of Riemannian 4-manifolds that are both ``close" to being locally conformally flat but which nevertheless do not satisfy \eqref{eqn:main}:
\begin{enumerate}[leftmargin=*]
\item[i.] \emph{Products of constant curvature surfaces}: Let $(M_1,g_1),(M_2,g_2)$ be two oriented Riemannian 2-manifolds with constant sectional curvatures $c_1,c_2$. Then their product $(M_1\times M_2,g_1\oplus g_2)$ is an oriented 4-manifold whose Weyl tensor is given by
$$
W = \frac{c_1+c_2}{6}\Big(g_1\,{\tiny \owedge}\,g_1 - g_1\,{\tiny \owedge}\,g_2 + g_2\,{\tiny \owedge}\,g_2\Big)
$$
(see \cite[p.~261]{Lee}). If $c_1 \neq -c_2$, then $W \neq 0$. However, for any choice of $T = ax_1+bx_2$ with $x_1 \in T_{p_1}M_1$ and $x_2 \in T_{p_2}M_2$ vectors of unit length and $a,b\in \RR$, 
\beqa
W(T,x_1,x_1,T) \!\!&=&\!\! b^2W(x_2,x_1,x_1,x_2) = \frac{c_1+c_2}{6}(-b^2),\nonumber\\
W(T,x_2,x_2,T) \!\!&=&\!\! a^2W(x_1,x_2,x_2,x_1) = \frac{c_1+c_2}{6}(-a^2),\nonumber
\eeqa
and these vanish if and only if $a=b=0$. Thus if $W \neq 0$, then \eqref{eqn:main} cannot be satisfied for any nonzero $T$.
\item[ii.] \emph{Self-dual/anti-self-dual 4-manifolds}: The easiest way to verify this is to recall that $W^{\pm} = 0 \iff A=\pm B$, together with the fact that \eqref{eqn:main} implies that $A=O$ in some orthonormal basis (see \eqref{eqn:cond2} below). But let us verify this directly here. Thus, suppose that \eqref{eqn:main} holds for a self-dual metric ($W^- = 0$), so that $\hat{W}(\xi) = 0$ for all $\xi \in \Lambda^-$. Then taking an oriented orthonormal basis of the form $\{T=e_1,e_2,e_3,e_4\}$, and forming the following three eigenvectors of $\Lambda^-$,
$$
\hspace{.25in}\Big\{\frac{1}{\sqrt{2}}(\ww{e_1}{e_2} - \ww{e_3}{e_4}),\frac{1}{\sqrt{2}}(\ww{e_1}{e_3} - \ww{e_4}{e_2}),\frac{1}{\sqrt{2}}(\ww{e_1}{e_4} - \ww{e_2}{e_3})\Big\},
$$
it will be the case that
$$
\hat{W}(\ww{e_1}{e_2}) = \hat{W}(\ww{e_3}{e_4}) \commass \hat{W}(\ww{e_1}{e_3}) = \hat{W}(\ww{e_4}{e_2}) \commass \hat{W}(\ww{e_1}{e_4}) = \hat{W}(\ww{e_2}{e_3}).  
$$
Consider the first of these; because $W_{1212} = W_{1213} = W_{1214} = 0$ by \eqref{eqn:main}, we have
\beqa
\label{eqn:check1}
\hat{W}(\ww{e_1}{e_2}) = W_{1234}\ww{e_3}{e_4}+W_{1242}\ww{e_4}{e_2}+W_{1223}\ww{e_2}{e_3}.
\eeqa
On the other hand, because $W_{3434} = W_{1212} = 0$, $W_{3442} = W_{1312} = 0$, and $W_{3423} = W_{1412} = 0$ (recall \eqref{eqn:WW}), we also have
\beqa
\label{eqn:check2}
\hat{W}(\ww{e_3}{e_4}) = W_{3412}\ww{e_1}{e_2}+W_{3413}\ww{e_1}{e_3}+W_{3414}\ww{e_1}{e_4}.
\eeqa
Thus the only way to satisfy $\hat{W}(\ww{e_1}{e_2}) = \hat{W}(\ww{e_3}{e_4})$ is for both to be zero. Similarly for the other two cases, as well as for the anti-self-dual case $(W^+=0)$. We thus conclude that any self-dual or anti-self-dual 4-manifold that is not locally conformally flat cannot satisfy \eqref{eqn:main}.
\end{enumerate}

To understand when $W(T,\cdot,\cdot,T)=0$ \emph{is} satisfied, we now turn to the classical work of \cite{berger,thorpe2} (even more important for us will be its Lorentzian counterpart \cite{thorpe}, in Section \ref{sec:L_counterpart} below); \cite{berger,thorpe2} showed that, at each $p\in M$, there is an orthonormal basis $\{e_1,e_2,e_3,e_4\} \subseteq T_pM$ relative to which $\hat{W}$ takes the form
\beqa
\label{eqn:WW1}
\cow = \begin{bmatrix}
\lambda_1 & 0 & 0 & \mu_1 & 0 & 0\\
0 & \lambda_2 & 0 & 0 & \mu_2 & 0\\
0 & 0 & \lambda_3 & 0 & 0 & \mu_3\\
\mu_1 & 0 & 0 & \lambda_1 & 0 & 0\\
0 & \mu_2 & 0 & 0 & \lambda_2 & 0\\
0 & 0 & \mu_3 & 0 & 0 & \lambda_3
\end{bmatrix},
\eeqa 
such that
\beqa
\label{eqn:Bid}
\underbrace{\,\lambda_1+\lambda_2+\lambda_3 = 0\,}_{\text{because $W$ is trace-free}} \comma \underbrace{\,\mu_1+\mu_2+\mu_3 = 0\,,}_{\text{because $W$ satisfies the algebraic Bianchi identity}}
\eeqa
and such that the 2-planes
$
P_1 \defeq \ww{e_1}{e_2}, P_2 \defeq \ww{e_1}{e_3}, P_3 \defeq \pm\ww{e_1}{e_4},
$ satisfy
$
\hat{W}P_i = \lambda_iP_i + \mu_i \hsr\!P_i, i=1,2,3,
$
with the $\lambda_i,\mu_i$ being completely determined by the critical points and values of the quadratic form \eqref{eqn:qaudW}; in fact the $\lambda_i$'s are (some of) its critical values. (E.g., the Weyl tensor of \eqref{eqn:*} is in normal form relative to the orthonormal basis $\{\partial_r/\sqrt{8x^3},\partial_x,\sqrt{8x^3}\partial_y,\partial_z\}$.) The normal form \eqref{eqn:WW1} readily yields a pointwise algebraic classification of oriented Riemannian 4-manifolds satisfying \eqref{eqn:main}, given by the following overdetermined system:

\begin{thm}
\label{thm:alg}
Let $(M,g)$ be an oriented Riemannian 4-manifold whose Weyl tensor $W$ has normal form \eqref{eqn:WW1} at $p\in M$. Then
$$
W(T,\cdot,\cdot,T)=0 \phantom{\comma} \text{for $T \defeq \sum_i c_ie_i \in T_pM$}
$$
if and only if the following ten equations hold for the $c_i$:
\beqa
\underbrace{\,c_2^2\lambda_1 +c_3^2\lambda_2+c_4^2\lambda_3 = 0\,}_{\text{$W(v,e_1,e_1,v)=0$}} &\comma& \underbrace{\,c_1c_2\lambda_1 +c_3c_4(\mu_2-\mu_3) = 0\,}_{\text{$W(v,e_1,e_2,v)=0$}},\nonumber\\
\underbrace{\,c_1c_3\lambda_2 +c_2c_4(-\mu_1+\mu_3) = 0\,}_{\text{$W(v,e_1,e_3,v)=0$}} &\comma& \underbrace{\,c_1c_4\lambda_3 +c_2c_3(\mu_1-\mu_2) = 0\,}_{\text{$W(v,e_1,e_4,v)=0$}},\nonumber\\
\underbrace{\,c_1^2\lambda_1 +c_4^2\lambda_2+c_3^2\lambda_3 = 0\,}_{\text{$W(v,e_2,e_2,v)=0$}} &\comma& \underbrace{\,c_2c_3\lambda_3 +c_1c_4(\mu_1-\mu_2) = 0\,}_{\text{$W(v,e_2,e_3,v)=0$}},\nonumber\\
\underbrace{\,c_2c_4\lambda_2 +c_1c_3(-\mu_1+\mu_3) = 0\,}_{\text{$W(v,e_2,e_4,v)=0$}} &\comma& \underbrace{\,c_4^2\lambda_1 +c_1^2\lambda_2+c_2^2\lambda_3 = 0\,}_{\text{$W(v,e_3,e_3,v)=0$}},\nonumber\\\underbrace{\,c_3c_4\lambda_1 +c_1c_2(\mu_2-\mu_3) = 0\,}_{\text{$W(v,e_3,e_4,v)=0$}} &\comma& \underbrace{\,c_3^2\lambda_1 +c_2^2\lambda_2+c_1^2\lambda_3 = 0\,}_{\text{$W(v,e_4,e_4,v)=0$}}.\nonumber
\eeqa
\end{thm}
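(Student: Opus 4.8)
The plan is to convert the single tensorial identity $W(T,\cdot,\cdot,T)=0$ into a finite list of scalar equations and then read each one off the normal form \eqref{eqn:WW1}.

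First I would observe that $(v,w)\mapsto W(T,v,w,T)$ is a bilinear form on $T_pM$, so that $W(T,\cdot,\cdot,T)=0$ holds if and only if $W(T,e_j,e_k,T)=0$ for every pair $(j,k)$. The symmetries of $W$\,---\,the pairwise symmetry $W_{ijkl}=W_{klij}$ together with antisymmetry in each slot-pair\,---\,make this form symmetric, $W(T,e_j,e_k,T)=W(T,e_k,e_j,T)$, so it suffices to range over unordered pairs $\{j,k\}$. There are exactly $\binom{4}{2}+4=10$ of these, and they correspond precisely to the ten displayed equations (indexed there by $W(v,e_j,e_k,v)=0$ with $v=T$).

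Next, for each pair I would expand by multilinearity, $W(T,e_j,e_k,T)=\sum_{a,b}c_ac_b\,W(e_a,e_j,e_k,e_b)$, and evaluate every coefficient through the identity $W(e_a,e_j,e_k,e_b)=\ipr{\cow(\ww{e_a}{e_j})}{\ww{e_k}{e_b}}$ and the explicit action of $\cow$ on the $\ipr{\,}{}$-orthonormal basis \eqref{eqn:Hbasis0} recorded in \eqref{eqn:WW1}:
\begin{align*}
\cow(\ww{e_1}{e_2}) &= \lambda_1\,\ww{e_1}{e_2}+\mu_1\,\ww{e_3}{e_4}, & \cow(\ww{e_3}{e_4}) &= \mu_1\,\ww{e_1}{e_2}+\lambda_1\,\ww{e_3}{e_4},\\
\cow(\ww{e_1}{e_3}) &= \lambda_2\,\ww{e_1}{e_3}+\mu_2\,\ww{e_4}{e_2}, & \cow(\ww{e_4}{e_2}) &= \mu_2\,\ww{e_1}{e_3}+\lambda_2\,\ww{e_4}{e_2},\\
\cow(\ww{e_1}{e_4}) &= \lambda_3\,\ww{e_1}{e_4}+\mu_3\,\ww{e_2}{e_3}, & \cow(\ww{e_2}{e_3}) &= \mu_3\,\ww{e_1}{e_4}+\lambda_3\,\ww{e_2}{e_3}.
\end{align*}
The diagonal pairings produce the $\lambda_i$-terms (for example $W(e_2,e_1,e_1,e_2)=-\lambda_1$ yields the $c_2^2\lambda_1$ in the first equation), while pairings that couple the $\{1,2\}$ and $\{3,4\}$ index blocks produce the $\mu_i$-terms; every remaining coefficient vanishes because the corresponding entry of \eqref{eqn:WW1} is zero.

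The only delicate point\,---\,and the main obstacle\,---\,is the sign bookkeeping in the mixed terms, which is what turns the $\mu_i$ into the antisymmetric combinations $\mu_i-\mu_j$. For instance, in $W(T,e_1,e_2,T)$ the surviving mixed coefficients are $W(e_3,e_1,e_2,e_4)=\mu_2$ and $W(e_4,e_1,e_2,e_3)=-\mu_3$, obtained from the display above after reordering $\ww{e_3}{e_1}=-\ww{e_1}{e_3}$ and the like; weighting by $c_3c_4$ gives the stated $c_3c_4(\mu_2-\mu_3)$. I would carry out this reduction for all ten pairs, using the antisymmetries to send each $W(e_a,e_j,e_k,e_b)$ to one of the six entries $\lambda_i,\mu_i$. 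Two remarks streamline the verification: an overall sign may appear in a given equation (as in $W(T,e_1,e_1,T)=-(c_2^2\lambda_1+c_3^2\lambda_2+c_4^2\lambda_3)$), but this is immaterial for an ``if and only if'' since it leaves the zero set unchanged; and the constraints \eqref{eqn:Bid} are exactly what guarantees the $\mu_i$ enter only in the displayed antisymmetric pairings. Collecting terms then reproduces the system verbatim.
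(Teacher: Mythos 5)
Your proposal is correct and is essentially the paper's own (one-line) proof: expand $W(T,e_j,e_k,T)=\sum_{a,b}c_ac_b W_{ajkb}$ over the ten unordered index pairs and read the coefficients off the normal form \eqref{eqn:WW1}, with the signs (and hence the combinations $\mu_i-\mu_j$) coming from reorderings like $\ww{e_3}{e_1}=-\ww{e_1}{e_3}$. One small inaccuracy in a side remark: the antisymmetric pairings $\mu_i-\mu_j$ arise purely from this wedge-reordering bookkeeping, not from the Bianchi/trace constraints \eqref{eqn:Bid}, which are never needed in the derivation.
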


\begin{proof}
All of these follow from expanding
$$
W(T,e_j,e_k,T) = \sum_{i,l}c_ic_lW_{ijkl}
$$
and using \eqref{eqn:WW} and \eqref{eqn:WW1}.
\end{proof}

More important for us will be the ``Lorentzian" version of these equations in Theorem \ref{thm:onlyI}, but let us use Theorem \ref{thm:alg} now to verify that nontrivial solutions do exist, and that, as we saw in \eqref{eqn:*}, they are not, in general, unique:

\begin{cor}
\label{cor:examples}
Assume that the Weyl tensor is nonzero in Theorem \ref{thm:alg}.  Then the vector $T = e_1+e_2+e_3+e_4$ satisfies $W(T,\cdot,\cdot,T)=0$ if and only if $\lambda_1 = -\mu_1-2\mu_2$ and $\lambda_2=2\mu_1+\mu_2$. If so, then so do the vectors $-e_1-e_2+e_3+e_4,-e_1+e_2-e_3+e_4$, and $-e_1+e_2+e_3-e_4$. On the other hand, if $\mu_2=\mu_3$ and $\lambda_2 = \lambda_3 \neq 0$, then the only solution is $T=0$.
\end{cor}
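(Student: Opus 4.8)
The plan is to derive all three assertions directly from the ten-equation system of Theorem~\ref{thm:alg}, substituting the prescribed coefficients $c_i$ and using the two trace constraints $\lambda_1+\lambda_2+\lambda_3=0$ and $\mu_1+\mu_2+\mu_3=0$ throughout. The statement splits into three independent sub-claims, and in each the real work is bookkeeping: determining which of the ten equations are genuinely independent after substitution.

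First, for $T=e_1+e_2+e_3+e_4$ I would set $c_1=c_2=c_3=c_4=1$. The four ``diagonal'' equations (those coming from $W(v,e_i,e_i,v)=0$) then each collapse to $\lambda_1+\lambda_2+\lambda_3=0$ and so hold automatically. The six ``off-diagonal'' equations reduce to just three distinct relations,
\[
\lambda_1=\mu_3-\mu_2,\qquad \lambda_2=\mu_1-\mu_3,\qquad \lambda_3=\mu_2-\mu_1,
\]
the remaining three being literal repetitions. Eliminating $\mu_3=-\mu_1-\mu_2$ converts the first two into $\lambda_1=-\mu_1-2\mu_2$ and $\lambda_2=2\mu_1+\mu_2$, while the third becomes $\lambda_3=\mu_2-\mu_1$, which is already forced by $\lambda_1+\lambda_2+\lambda_3=0$. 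This yields the claimed equivalence.

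For the three further vectors, the key observation is that $(1,1,1,1)$ together with $(-1,-1,1,1)$, $(-1,1,-1,1)$, and $(-1,1,1,-1)$ are exactly the sign patterns with $c_1c_2c_3c_4=1$. Under any such pattern one checks that $c_1c_2=c_3c_4$, $c_1c_3=c_2c_4$, and $c_1c_4=c_2c_3$; since every off-diagonal equation pairs precisely these products (for instance $c_1c_2\lambda_1+c_3c_4(\mu_2-\mu_3)$), each factors as a common sign times the \emph{same} linear expression that appeared for $T=e_1+\cdots+e_4$. The diagonal equations involve only the squares $c_i^2=1$ and are therefore sign-insensitive. Hence the conditions imposed on the $\lambda_i,\mu_i$ are identical, so these three vectors solve the system whenever $e_1+\cdots+e_4$ does.

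Finally, for uniqueness under $\mu_2=\mu_3$ and $\lambda_2=\lambda_3\neq0$, I would substitute $\mu_1=-2\mu_2$ and $\lambda_1=-2\lambda_2$, both forced by the trace identities. Because $\mu_2-\mu_3=0$, the equations from $W(v,e_1,e_2,v)=0$ and $W(v,e_3,e_4,v)=0$ collapse to $c_1c_2=0$ and $c_3c_4=0$. Because $\lambda_2=\lambda_3\neq0$, the four diagonal equations become $c_3^2+c_4^2=2c_2^2=2c_1^2$ and $c_1^2+c_2^2=2c_3^2=2c_4^2$, which force $c_1^2=c_2^2=c_3^2=c_4^2$. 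Then $c_1c_2=0$ together with $c_1^2=c_2^2$ forces this common value to vanish, so $T=0$. The only genuinely conceptual steps are the $c_1c_2c_3c_4=1$ characterization in the second part and the observation that the diagonal equations alone equalize all the $c_i^2$ in the third; everything else is routine substitution, so I do not anticipate any serious obstacle beyond careful organization of the ten relations.
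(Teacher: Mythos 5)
Your proof is correct and follows essentially the same route as the paper: direct substitution into the ten equations of Theorem~\ref{thm:alg} together with the trace identities \eqref{eqn:Bid}, with the sign-pattern observation $c_1c_2=c_3c_4$, etc., handling the three additional vectors. The only cosmetic difference is in the final claim, where the paper case-splits on ($c_1=0$ or $c_2=0$) versus ($c_3=0$ or $c_4=0$), while you instead use the four diagonal equations to force $c_1^2=c_2^2=c_3^2=c_4^2$ before invoking $c_1c_2=0$; both close the argument.
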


\begin{proof}
With $c_1=c_2=c_3=c_4=1$, and recalling \eqref{eqn:Bid}, the proofs of the first two statements follow easily from inspection of the equations above. Regarding the final statement, we must have $\lambda_1 = -2\lambda_2 \neq 0$; by the second equation, we have $c_1 = 0$ or $c_2 = 0$. On the other hand, the second-to-last equation yields $c_3 = 0$ or $c_4=0$. Any of these pairings, when inserted into the remaining equations, will yield $T= 0$. 
\end{proof}

In Corollary \ref{cor:examples}, if $\lambda_2 \neq \lambda_3$ then the last statement no longer holds, in that nontrivial solutions may exist; indeed, this was the case in \eqref{eqn:*}.

\section{Introducing a Lorentzian metric}
\label{sec:L_counterpart}
What Theorem \ref{thm:alg} does not address is what further \emph{geometry}, if any, lies behind $W(T,\cdot,\cdot,T) = 0$. As we now show, there is further geometry, if one is prepared to change the signature of $g$, to that of \emph{Lorentzian} metric. To see how, let us start with the observation that, in the basis \eqref{eqn:Hbasis0} with $T=e_1$, and recalling \eqref{eqn:WW},
\beqa
\label{eqn:cond2}
W(T,\cdot,\cdot,T) = 0 \iff \cow = \begin{bmatrix}
O & B\\
B & O
\end{bmatrix}\cdot
\eeqa

As we now show, \eqref{eqn:cond2} ensures that $W$ is also trace-free with respect to \emph{another} metric\,---\,and that as a consequence it will commute with that metric's Hodge star operator:

\begin{prop}
\label{prop:W1}
Let $(M,g)$ be an oriented Riemannian 4-manifold with Weyl tensor $W$ and let $T$ be a unit-length vector field on $M$. With respect to the Lorentzian metric \emph{$\gL \defeq g - 2T^{\flat} \otimes T^{\flat}$}, the linear endomorphism $\coww\colon\Lambda^2\lra \Lambda^2$ defined by
\beqa
\label{eqn:WR}
\text{\emph{$\ipl{\coww(v\wedge w)}{x\wedge y} \defeq W(v,w,x,y)$}}\hspace{.2in}\text{for all $v,w,x,y \in T_pM$}
\eeqa
commutes with the Hodge star \emph{$\hsl\colon \Lambda^2 \lra \Lambda^2$} of \emph{$\gL \iff W(T,\cdot,\cdot,T) = 0$}. If so, then $\coww$ is a complex-linear map on the three-dimensional complex vector space $\cx$ defined via \emph{$i\xi \defeq \hsl \xi$} for all $\xi \in \Lambda^2$. 
\end{prop}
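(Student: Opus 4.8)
The plan is to reduce the basis-free commutativity claim to one $6\times 6$ block computation in a frame adapted to $T$, and to read both the equivalence and the complex-linearity off that computation. Working at a fixed $p\in M$ (both assertions are pointwise and algebraic), I would first choose a $g$-orthonormal frame $\{e_1=T,e_2,e_3,e_4\}$, available since $T$ has unit $g$-length. The crucial preliminary is that this same frame is $\gL$-orthonormal with $e_1$ \emph{timelike}: from $T^{\flat}(e_1)=1$ and $T^{\flat}(e_i)=0$ for $i\geq 2$ one gets $\gL(e_1,e_1)=1-2=-1$ and $\gL(e_i,e_j)=\delta_{ij}$ otherwise, so $\gL=\mathrm{diag}(-1,1,1,1)$ here. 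Hence the lifted basis \eqref{eqn:Hbasis0}, which I denote $\Omega_1,\dots,\Omega_6$, is $\ipl{\,}{}$-orthogonal with Gram matrix $G=\mathrm{diag}(-1,-1,-1,1,1,1)$: the three $2$-vectors containing $e_1$ have $\gL$-norm $-1$, the remaining three have $\gL$-norm $+1$.

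Next I would compute both operators in this basis. The defining relation $\ww{\xi}{\hsl\eta}=\ipl{\xi}{\eta}\,dV_{\gL}$ together with the timelike sign gives
$$
\hsl=\begin{bmatrix} O & I\\ -I & O\end{bmatrix},
$$
differing from its Riemannian analogue \eqref{eqn:HbasisR} exactly by the minus sign carried by $e_1$, and satisfying $\hsl^2=-I$ (the expected behavior of the Lorentzian Hodge star on $2$-forms). For $\coww$ I would exploit that the components $W_{ijkl}$ are frame data independent of which metric raises indices: since \eqref{eqn:Hbasis0} is $\ipr{\,}{}$-orthonormal, $[\cow]$ is precisely the component matrix \eqref{eqn:WW}, namely $\begin{bmatrix} A & B\\ B & A\end{bmatrix}$ with $A,B$ symmetric; whereas $\ipl{\coww\Omega_a}{\Omega_b}=W(\Omega_a,\Omega_b)=\ipr{\cow\Omega_a}{\Omega_b}$ combined with the $\gL$-Gram matrix $G$ forces $[\coww]=G^{-1}[\cow]=G[\cow]=\begin{bmatrix} -A & -B\\ B & A\end{bmatrix}$ (using $G^2=I$).

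With both matrices explicit, the equivalence is a single block multiplication:
$$
[\coww][\hsl]=\begin{bmatrix} B & -A\\ -A & B\end{bmatrix}\commas
[\hsl][\coww]=\begin{bmatrix} B & A\\ A & B\end{bmatrix},
$$
and these agree if and only if $A=O$. Because commutativity of two endomorphisms is basis-independent, checking it in this one adapted frame settles it, and by \eqref{eqn:cond2} the condition $A=O$ is exactly $W(T,\cdot,\cdot,T)=0$; this establishes the central equivalence. For the remaining claim, once $\coww\circ\hsl=\hsl\circ\coww$, the identity $\hsl^2=-I$ lets $\hsl$ act as multiplication by $i$, turning the real $6$-dimensional space $\Lambda^2$ into the complex $3$-dimensional space $\cx$ via $i\xi\defeq\hsl\xi$; then $\coww(i\xi)=\coww(\hsl\xi)=\hsl(\coww\xi)=i\,\coww(\xi)$ shows $\coww$ is $\mathbb{C}$-linear, as required.

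I expect the only real hazards to be bookkeeping rather than conceptual. The first is the sign in $\hsl$: the timelike $e_1$ flips precisely the blocks on which the Riemannian $\Lambda^{\pm}$ decomposition rested, producing $\hsl^2=-I$ instead of $+I$, and a stray sign here would swap the conclusion between $A=O$ and no constraint on $A$ at all. The second is the passage from tensor components to $[\coww]$ through the Lorentzian Gram matrix $G$ rather than the identity. I would guard both by cross-checking against the Riemannian formulas \eqref{eqn:HbasisR} and \eqref{eqn:WW} and against \eqref{eqn:cond2}, all of which must specialize correctly.
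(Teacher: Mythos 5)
Your proposal is correct and follows essentially the same route as the paper: you work in the adapted $g$- and $\gL$-orthonormal frame with $e_1 = T$, derive the block forms \eqref{eqn:WW2} and \eqref{eqn:Hbasis}, and reduce commutativity to $A = O$ by exactly the block computation \eqref{eqn:A=O}, identifying $A = O$ with $W(T,\cdot,\cdot,T) = 0$ via \eqref{eqn:cond2}, with complex-linearity then following from $\hsl^2 = -\mathrm{id}$. Your only (harmless) additions are the explicit derivation of $[\coww]$ from $[\cow]$ through the Lorentzian Gram matrix $\mathrm{diag}(-1,-1,-1,1,1,1)$ and the remark that commutativity may be checked in a single basis, both of which the paper leaves implicit.
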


Before beginning the proof, note that $\coww$ is merely the endomorphism of $W$ induced via $\ipl{\,}{}$ in place of $\ipr{\,}{}$; by construction, it is $\ipl{\,}{}$-self-adjoint and satisfies the algebraic Bianchi identity with respect to it. In terms of a $\gL$- and $g$-orthonormal basis $\{T= e_1,e_2,e_3,e_4\}$, $\coww$ takes the form
\beqa
\hspace{-.2in}\coww \!\!&=&\!\! \begin{bmatrix}
-W_{1212} & -W_{1312} & -W_{1412} & -W_{3412} & -W_{4212} & -W_{2312}\\
-W_{1213} & -W_{1313} & -W_{1413} & -W_{3413} & -W_{4213} & -W_{2313}\\
-W_{1214} & -W_{1314} & -W_{1414} & -W_{3414} & -W_{4214} & -W_{2314}\\
W_{1234} & W_{1334} & W_{1434} & W_{3434} & W_{4234} & W_{2334}\\
W_{1242} & W_{1342} & W_{1442} & W_{3442} & W_{4242} & W_{2342}\\
W_{1223} & W_{1323} & W_{1423} & W_{3423} & W_{4223} & W_{2323}
\end{bmatrix}\label{eqn:WW2*}\\
&=&\!\! \begin{bmatrix}
-A & -B\\
B & A
\end{bmatrix}\cdot\label{eqn:WW2}
\eeqa
What accounts for this difference with \eqref{eqn:WW} is that $\gL(e_1,e_1) = -1$, so that
$$
\ipl{\ww{e_1}{e_i}}{\ww{e_1}{e_i}} = -1 \comma i=2,3,4.
$$
Another difference is that $\gL$'s Hodge star operator $\hsl\colon \Lambda^2\lra \Lambda^2$ satisfies
\beqa
\arraycolsep=1.4pt\def\arraystretch{1.5}
\left\{\begin{array}{lr}
\hsl(\ww{e_1}{e_2}) = -\ww{e_3}{e_4},\\
\hsl(\ww{e_1}{e_3}) = -\ww{e_4}{e_2},\\
\hsl(\ww{e_1}{e_4}) = -\ww{e_2}{e_3},
\end{array}\right.  \comma \arraycolsep=1.4pt\def\arraystretch{1.5}
\left\{\begin{array}{lr}
\hsl(\ww{e_3}{e_4}) = \ww{e_1}{e_2},\\
\hsl(\ww{e_4}{e_2}) = \ww{e_1}{e_3},\\
\hsl(\ww{e_2}{e_3}) = \ww{e_1}{e_4}\label{eqn:minus}
\end{array}\right.
\eeqa
($\hsl$ is defined via $\ww{\xi}{\hsl\eta} \defeq \ipl{\xi}{\eta}\,dV$), or in block matrix form,
\beqa
\label{eqn:Hbasis}
\hsl = \begin{bmatrix}
O & I\\
-I & O
\end{bmatrix}\cdot
\eeqa

With these established, the proof of Proposition \ref{prop:W1} is now an easy matter:

\begin{proof}
Set $\{T\defeq e_1,e_2,e_3,e_4\}$. If $W(T,\cdot,\cdot,T) = 0$, then by \eqref{eqn:cond2} and \eqref{eqn:WW2},
$$
\hspace{-.2in}\coww = \begin{bmatrix}
O & -B\\
B & O
\end{bmatrix},
$$
which is precisely the condition needed to commute with \eqref{eqn:Hbasis}:
\beqa
\begin{bmatrix}
-A & -B\\
B & A
\end{bmatrix}\begin{bmatrix}
O & I\\
-I & O
\end{bmatrix} = \begin{bmatrix}
O & I\\
-I & O
\end{bmatrix}\begin{bmatrix}
-A & -B\\
B & A
\end{bmatrix} \iff A = O.\label{eqn:A=O}
\eeqa
The remainder of Proposition \ref{prop:W1} now follows from the fact that $\hsl^2 = -\text{id}$.
\end{proof}

Being $\gL$-trace-free, and commuting with $\hsl$, are equivalent for $\coww$\,---\,and the point of Proposition \ref{prop:W1} is that they are guaranteed by the condition $W(T,\cdot,\cdot,T) = 0$. It is also a concrete geometric realization of the general notion of ``$\hsl$-Einstein metric" formulated in \cite[Definition~3]{aazami}. In any case, with this Lorentzian structure in hand, we now turn to classifying these metrics using the technique of \cite{thorpe}. As in the latter, the key will be the eigenstructure of the (now) complex-linear map $\coww\colon \cx\lra \cx$:

\begin{defn}[Petrov Type]
\label{def:Ptype}
Let $(M,g)$ be an oriented Riemannian 4-manifold whose Weyl tensor $W$ satisfies $W(T,\cdot,\cdot,T) = 0$ for some unit-length vector field $T$ on $M$.  Then $(M,g)$ has \emph{Petrov Type I, D, II, N, or III} at each $p\in M$ if the complex-linear map \emph{$\coww\colon \cx \lra \cx$} has 3 \emph{(I}, \emph{D)}, 2 \emph{(II}, \emph{N)}, or 1 \emph{(III)} linearly independent complex eigenvectors at $p$, respectively, with \emph{I} and \emph{D} having 3 and 2 distinct complex eigenvalues, respectively, and \emph{II} and \emph{N} having 2 and 1 distinct complex eigenvalues, respectively.   
\end{defn}

(If $(M,g)$ has only 1 distinct complex eigenvalue, then that eigenvalue is necessarily 0 because $W$, hence $\coww$, is $\gL$-trace-free; note that we ignore the $W=0$ case in Definition \ref{def:Ptype}.) For the next definition, recall that an oriented 2-dimensional subspace $P \subseteq T_pM$ is \emph{nondegenerate} if the restriction of $\gL$ to $P$ is nondegenerate. The \emph{sign of $P$}, denoted $\vepo(P) = \pm1$, is defined to be $-1$ if this restriction has Lorentzian signature and $+1$ if it is positive-definite.  The 2-plane \emph{$\gL$-orthogonal} to $P$
 is $\hsl P$, since $\ipl{P}{\hsl P} = 0$. Finally, following \cite{thorpe}, let $G_{\pm}(p) \subseteq \Lambda^2(T_pM)$ denote the set of all oriented, nondegenerate 2-dimensional subspaces of $T_pM$ with signs $\pm1$, respectively.  Note that $\vepo(P) = \ipl{P}{P}$ for any $P \in G_{\pm}(p)$.

\begin{defn}[$\gL$-quadratic form of $W$]
\label{def:T_sec0coo1}
Assume the hypotheses of Definition \ref{def:Ptype}. Then the function \emph{$\Wsec$}, defined on each $G_+(p)\cup G_-(p) \subseteq T_pM$ by
\beqa
\label{def:T_seccoo1}
\text{\emph{$\Wsec$}}(P) \defeq \text{\emph{$\vepo(P)\ipl{\coww P}{P}$}},
\eeqa
with $\coww$ given by \eqref{eqn:WR}, is the \emph{$\gL$-quadratic form of the Weyl tensor $W$ of $g$}.
\end{defn}

Because $\coww$ is $\ipl{\,}{}$-self-adjoint, its Petrov Types are related to the critical points of $\Wsec$ \emph{by exactly the same method of proof as in} \cite{thorpe}:

\begin{thm}[Classification of Petrov Types]
\label{thm:n2}
Assume the hypotheses of Definition \ref{def:Ptype}.  Then at each $p \in M$, $g$ has
\begin{enumerate}[leftmargin=*]
\item[1.] Petrov Type I \emph{$\!\!\iff\!\! \Wsec$} has $n=3$ spacelike critical points.
\item[2.] Petrov Types D or N \emph{$\!\!\iff\!\! \Wsec$} has $n=\infty$ spacelike critical points.
\item[3.] Petrov Type II \emph{$\!\!\iff\!\! \Wsec$} has $n=1$ spacelike critical point.
\item[4.] Petrov Type III \emph{$\!\!\iff\!\! \Wsec$} has $n=0$ spacelike critical points.
\end{enumerate}
\end{thm}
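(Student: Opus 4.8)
The plan is to transport Thorpe's argument \cite{thorpe} to the present setting, where the complex structure on $\cx$ is supplied by $\hsl$ (Proposition \ref{prop:W1}). First I would repackage the algebra into a single complex bilinear form. Because $\hsl^2 = -\text{id}$ and $\hsl$ is $\ipl{\,}{}$-self-adjoint, the assignment $Q(\xi,\eta) \defeq \ipl{\xi}{\eta} - i\,\ipl{\xi}{\hsl\eta}$ is a nondegenerate $\mathbb{C}$-bilinear symmetric form on $\cx$; and since $\coww$ is $\ipl{\,}{}$-self-adjoint and commutes with $\hsl$, it is $Q$-self-adjoint, as well as $Q$-trace-free because $W$ is trace-free. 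The observation that drives everything is that a $2$-vector $P$ is decomposable precisely when $\ipl{P}{\hsl P} = 0$, i.e. precisely when $Q(P,P) \in \RR$; for such $P$ one has $Q(P,P) = \ipl{P}{P} = \vepo(P)$, so the spacelike $2$-planes are exactly the real $2$-vectors with $Q(P,P) = 1$.

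Second, I would run the Lagrange-multiplier computation for critical points of $\Wsec$ on $G_+(p)$. Extremizing $\ipl{\coww P}{P}$ subject to $\ipl{P}{P} = 1$ and $\ipl{P}{\hsl P} = 0$ produces, using the $\ipl{\,}{}$-self-adjointness of $\coww$ and $\hsl$ and the nondegeneracy of $\ipl{\,}{}$, the Euler--Lagrange equation $\coww P = \alpha P + \beta\,\hsl P$ for real $\alpha,\beta$ --- that is, $\coww P = (\alpha + i\beta)P$ in $\cx$. Hence a spacelike $2$-plane is a critical point of $\Wsec$ if and only if it is a complex eigenvector of $\coww$, with critical value $\Wsec(P) = \alpha$. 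Conversely a complex eigenvector $Z$ can be rescaled to a decomposable spacelike unit $2$-vector exactly when $Q(Z,Z) \neq 0$ (take $w$ with $w^2 = 1/Q(Z,Z)$), whereas a $Q$-null eigenvector yields no spacelike critical point. Thus the number of spacelike critical points equals the number of non-$Q$-null complex eigenlines of $\coww$, each contributing one critical $2$-plane up to orientation, as in \cite{thorpe}.

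Third, I would reduce this count to the Jordan normal form of the $Q$-self-adjoint, $Q$-trace-free operator $\coww$ on $(\cx,Q) \cong \mathbb{C}^3$, using two facts: eigenvectors for distinct eigenvalues are $Q$-orthogonal, and the eigenvector at the foot of any Jordan chain of length $\geq 2$ is $Q$-null (apply $Q(\coww Z,X) = Q(Z,\coww X)$ along the chain). Running through the types: in \emph{I} the three eigenlines are mutually $Q$-orthogonal, hence all non-null by nondegeneracy of $Q$, so $n=3$; in \emph{D} the simple eigenline is non-null while $Q$ restricts nondegenerately to the $2$-dimensional eigenspace, whose non-null vectors form a one-parameter family of distinct critical $2$-planes, so $n=\infty$; in \emph{II} the Jordan-block eigenvector is null and only the simple eigenvector survives, so $n=1$; in \emph{III} the unique eigenvector is the foot of a length-$3$ chain, hence null, so $n=0$. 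The remaining Type \emph{N}, treated next, will also give $n=\infty$.

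The subtle case --- and the step I expect to be the main obstacle --- is Type \emph{N}, which must also yield $n=\infty$ yet has eigenvalue $0$ and so cannot be separated from Type \emph{III} by the distinct-eigenvalue orthogonality trick. Here I would compute $Q$ in a Jordan basis $\{e_1,e_2,e_3\}$ with $\coww e_1 = 0$, $\coww e_2 = e_1$, $\coww e_3 = 0$: self-adjointness forces $Q(e_1,e_1) = Q(e_1,e_3) = 0$, so $Q$ restricts to the $2$-dimensional eigenspace $\text{span}\{e_1,e_3\}$ as a \emph{degenerate} form of rank one, with $e_1$ spanning its single null line. The non-null eigenvectors $ae_1 + be_3$ with $b \neq 0$ then form a one-parameter family of genuinely distinct spacelike critical $2$-planes, giving $n = \infty$ and correctly grouping \emph{N} with \emph{D} rather than with \emph{III}. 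Verifying this degeneracy precisely, and checking that the family consists of distinct $2$-planes, is the crux that separates the three types adjacent to eigenvalue $0$. With all five counts in hand, the values $3,\infty,1,0$ separate the classes $\{\text{I}\}$, $\{\text{D},\text{N}\}$, $\{\text{II}\}$, $\{\text{III}\}$ exhaustively, which gives the reverse implications and completes the equivalences.
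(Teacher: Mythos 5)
Your proposal is correct and follows essentially the same route as the paper's own proof: the complex scalar product $\ggL(\xi,\eta) = \ipl{\xi}{\eta} - i\ipl{\xi}{\hsl\eta}$, the Lagrange-multiplier characterization of critical points as complex eigenvectors (Proposition \ref{prop:crit2}), and the Jordan-form case analysis with null feet of Jordan chains (Proposition \ref{thm:JordanPcoo}). In particular, your treatment of Type N --- showing $Q$ degenerates on the two-dimensional eigenspace so that the non-null eigenvectors still give a one-parameter family of spacelike critical $2$-planes, grouping N with D rather than III --- is exactly the point where the paper departs from \cite{thorpe}, and you identified and resolved it the same way.
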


\begin{proof}
Notice the slight difference from \cite{thorpe}; see the Appendix.
\end{proof}

There always exists a $\gL$-orthonormal basis $\{e_1,e_2,e_3,e_4\} \subseteq T_pM$\,---\,\emph{with $e_1$ timelike but not necessarily equal to $T$}\,---\,relative to which $\coww\colon\Lambda^2\lra\Lambda^2$ (i.e., as a real, not complex map) takes one of the following forms: 
\beqa
\label{eqn:Kerr}
\underbrace{\,\begin{bmatrix}
\lambda_1 & 0 & 0 & \mu_1 & 0 & 0\\
0 & \lambda_2 & 0 & 0 & \mu_2 & 0\\
0 & 0 & \lambda_3 & 0 & 0 & \mu_3\\
-\mu_1 & 0 & 0 & \lambda_1 & 0 & 0\\
0 & -\mu_2 & 0 & 0 & \lambda_2 & 0\\
0 & 0 & -\mu_3 & 0 & 0 & \lambda_3
\end{bmatrix},\,}_{\text{Petrov Types I or D (if D, then $\lambda_2=\lambda_3$ and $\mu_2=\mu_3$); $\sum_i\lambda_i = \sum_i \mu_i=0$}}
\eeqa
or
\beqa
\label{eqn:Kerr2}
\underbrace{\,\begin{bmatrix}
-2\lambda & 0 & 0 & -2\mu & 0 & 0\\
0 & \lambda-\frac{1}{2} & 0 & 0 & \mu & -\frac{1}{2}\\
0 & 0 & \lambda+\frac{1}{2} & 0 & -\frac{1}{2} & \mu\\
2\mu & 0 & 0 & -2\lambda & 0 & 0\\
0 & -\mu & \frac{1}{2} & 0 & \lambda-\frac{1}{2} & 0\\
0 & \frac{1}{2} & -\mu & 0 & 0 & \lambda+\frac{1}{2}
\end{bmatrix},\,}_{\text{Petrov Types II or N (if N, then $\lambda=\mu=0$)}}
\eeqa
or
\beqa
\label{eqn:Kerr3}
\underbrace{\,\frac{1}{\sqrt{2}}\!\begin{bmatrix}
0 & 1 & 0 & 0 & 0 & 0\\
1 & 0 & 0 & 0 & 0 & -1\\
0 & 0 & 0 & 0 & -1 & 0\\
0 & 0 & 0 & 0 & 1 & 0\\
0 & 0 & 1 & 1 & 0 & 0\\
0 & 1 & 0 & 0 & 0 & 0
\end{bmatrix}\,}_{\text{Petrov Type III}}\cdot
\eeqa
This follows by Jordan-normal form theory; cf.~\cite[pp.~3-4]{thorpe} (note that the timelike vector there is $e_4$, not $e_1$) and \cite[pp.~314~\&~324]{o1995}. However, in contrast to the Riemannian case, here the $\lambda_i,\mu_i$'s cannot be determined by just the first derivatives (i.e., critical points and values) of the quadratic form\,---\,the second derivatives of $\Wsec$ (i.e., its Hessian) are needed, too; see \cite[Footnote~10]{thorpe}. Nevertheless, we have at the moment two ``nice" bases: A $g$-orthonormal basis \`a la \cite{berger,thorpe2} giving the normal form \eqref{eqn:WW1} for $\cow$, and a $\gL$-orthonormal (Jordan) basis \`a la \cite{thorpe} giving one of the normal forms \eqref{eqn:Kerr}-\eqref{eqn:Kerr3} for $\coww$. Generally speaking, we would not expect the $g$-basis to contain $T$, hence we have no reason to expect that basis to be $\gL$-orthonormal also. At the same time, $e_1$ need not equal $T$ in the $\gL$-Jordan basis, either (if we could ensure that $e_1=T$, then we could immediately rule out Types II, N, and III, because $W(T,\cdot,\cdot,T) = 0$ would force $A=O$). Therefore, it is not clear at the moment what possible Petrov Types $\coww$ can actually have. We answer this question now:

\begin{thm}
\label{thm:onlyI}
Let $(M,g)$ be an oriented Riemannian 4-manifold whose Weyl tensor $W$ satisfies $W(T,\cdot,\cdot,T) = 0$ for some unit-length vector field $T$ on $M$. Then $(M,g)$ can only have Petrov Types I or D. Thus its Petrov Type is completely determined by whether \emph{$\Wsec$} has $3$ or $\infty$ critical points.
\end{thm}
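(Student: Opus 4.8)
The plan is to exploit that the Petrov Type of Definition \ref{def:Ptype} is a \emph{basis-independent} property of $\coww$ as a complex-linear endomorphism of $\cx$, and to compute it in the one frame the hypothesis makes canonical: a simultaneously $g$- and $\gL$-orthonormal basis $\{e_1,e_2,e_3,e_4\}$ with $e_1 = T$. Such a frame exists because $g(T,T)=1$ forces $\gL(T,T)=-1$, while $\gL=g$ on the $g$-orthogonal complement of $T$, so any $g$-orthonormal completion of $T$ is also $\gL$-orthonormal. This is exactly the frame whose unavailability in the Jordan picture the discussion after Theorem \ref{thm:n2} worried about; the point of the argument is that one never needs the Jordan basis at all, only the freedom to evaluate a basis-independent invariant in whichever frame is convenient.

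In this frame, Proposition \ref{prop:W1} together with \eqref{eqn:WW2} (with $A=O$, by \eqref{eqn:cond2}) gives $\coww = \begin{bmatrix} O & -B\\ B & O\end{bmatrix}$, where $B$ is the \emph{real}, symmetric, trace-free $3\times 3$ block of \eqref{eqn:WW} (symmetry from the block structure of the Weyl endomorphism, trace-freeness from the Bianchi identity \eqref{eqn:Bid}). I would then take $\{\ww{e_1}{e_2},\ww{e_1}{e_3},\ww{e_1}{e_4}\}$ as a complex basis of $\cx$ and use \eqref{eqn:minus} to rewrite $\ww{e_3}{e_4},\ww{e_4}{e_2},\ww{e_2}{e_3}$ as $-i$ times $\ww{e_1}{e_2},\ww{e_1}{e_3},\ww{e_1}{e_4}$, respectively. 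A direct computation then represents $\coww\colon\cx\lra\cx$ by the complex matrix $-iB$.

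The crux is then a one-line linear-algebra observation: because $B$ is a \emph{real} symmetric matrix it is diagonalizable over $\RR$, so $-iB$ is diagonalizable over $\mathbb{C}$, and hence $\coww$ has three linearly independent complex eigenvectors. By Definition \ref{def:Ptype} this already excludes Types II, N, and III, leaving only I or D. This is precisely where the hypothesis bites, and it is the step I expect to be the conceptual heart of the matter: a generic Lorentzian Weyl operator on $\cx$ is only \emph{complex}-symmetric and may therefore carry genuine Jordan blocks (the source of Types II, N, III), whereas $W(T,\cdot,\cdot,T)=0$ forces the representing matrix to be $-i$ times a matrix that is symmetric \emph{over the reals}, which cannot. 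The eigenvalues of $\coww$ are $-i\mu_1,-i\mu_2,-i\mu_3$ with the $\mu_i$ the real eigenvalues of $B$; since \eqref{eqn:Bid} gives $\sum_i\mu_i=0$, a single distinct eigenvalue would force $B=O$ and hence $W=0$ (which we exclude), so $\coww$ has either $3$ distinct eigenvalues (Type I) or exactly $2$ (Type D).

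Finally, I would feed this dichotomy into Theorem \ref{thm:n2}: Type I corresponds to \emph{$\Wsec$} having $n=3$ spacelike critical points, and Type D — now the only surviving member of the ``D or N'' case, since N has been ruled out — to $n=\infty$, so the Petrov Type is completely determined by whether \emph{$\Wsec$} has $3$ or $\infty$ critical points. The only real obstacle is conceptual rather than computational, namely recognizing that basis-independence lets one abandon the Jordan frame flagged after Theorem \ref{thm:n2} and read everything off the $e_1=T$ frame, where the decisive \emph{reality} of $B$ is manifest.
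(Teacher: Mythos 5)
Your proof is correct, but it takes a genuinely different route from the paper's. The paper argues by contradiction in the \emph{Jordan} frame: it assumes Type III or Type II/N, writes $\coww$ in the normal forms \eqref{eqn:Kerr3} and \eqref{eqn:Kerr2}, expands $T=\sum_i c_ie_i$ in that basis, and shows that the overdetermined system $W(T,e_i,e_j,T)=0$ (the Lorentzian analogue of Theorem \ref{thm:alg}) forces $c_1=c_3$, resp.\ $c_1=c_2$ or $T=0$, so that $T$ cannot be $\gL$-timelike\,---\,a case-by-case computation. You instead work in the frame adapted to $T$ itself, where the hypothesis makes $\coww$ equal to ${\tiny\begin{bmatrix} O & -B\\ B & O\end{bmatrix}}$, identify its complex matrix on $\cx$ as $-iB$ with $B$ real symmetric, and invoke the spectral theorem to get diagonalizability at once; this kills Types II, N, and III in one stroke and exhibits the eigenvalues as $-i\mu_j$ with $\sum_j\mu_j=0$, so that the $W\neq 0$ dichotomy is exactly $3$ versus $2$ distinct eigenvalues, i.e.\ Type I versus D. Your computation of the complex matrix checks out against \eqref{eqn:WW2*}, \eqref{eqn:minus}, and \eqref{eqn:check1}, and the symmetry of $B$ is exactly what \eqref{eqn:WW} records. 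What your argument buys is brevity, uniformity (no separate II/N/III cases), and a conceptual explanation\,---\,the \emph{reality} of $B$ is what forbids Jordan blocks\,---\,and it transfers verbatim to Theorem \ref{prop:N}. What the paper's argument buys is that it stays entirely within the elementary component equations of Theorem \ref{thm:alg} and makes visible \emph{which} equations obstruct each excluded Type; it also meshes with the paper's running theme of reading everything off the quadratic form and its critical-point count. Your final step, feeding the I/D dichotomy into Theorem \ref{thm:n2} to get the $3$-versus-$\infty$ critical point characterization, matches the paper exactly.
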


\begin{proof}
As mentioned above, if the (trace-free) Weyl tensor $W$ has Petrov Type III at $p \in M$, then a $\gL$-orthonormal basis $\{e_1,e_2,e_3,e_4\} \subseteq T_pM$ can be found, with $e_1$ timelike, such that relative to the corresponding basis \eqref{eqn:Hbasis0} for $\Lambda^2$, the real $6 \times 6$ matrix of $\coww\colon\Lambda^2\lra\Lambda^2$ is given by \eqref{eqn:Kerr3}. As $\ipl{\,}{}$ has index $(-\!-\!-\!+\!++)$, the only nonzero components of $W$ in this basis are
\beqa
\label{eqn:III*}
W_{1213} = -1/\sqrt{2} \commas W_{1323} = W_{1442} = W_{3442} = 1/\sqrt{2}.
\eeqa
We may now repeat the procedure of Theorem \ref{thm:alg}; i.e., set $T = \sum_ic_ie_i$ and list the equations $\WL(T,e_i,e_j,T) = 0$ (though this time $g_{11} = g^{11} = -1$, as $e_1$ is timelike). But our task is made simpler by the fact that, via  \eqref{eqn:III*}, the two equations 
$$
\underbrace{\,c_3^2-c_1c_3-c_4^2 = 0\,}_{\text{$W(T,e_1,e_2,T)=0$}} \comma \underbrace{\,c_1^2-c_1c_3+c_4^2 = 0\,}_{\text{$W(T,e_2,e_3,T)=0$}}
$$
alone yield $(c_1-c_3)^2=0$, hence $c_1=c_3$. But with $e_1$ the timelike direction, such a $T$ cannot be timelike. Thus Petrov Type III is excluded.  For Petrov Type II, we have instead the matrix \eqref{eqn:Kerr2}. This time, the relevant starting equation is $W(T,e_3,e_4,T) = -2c_3c_4\lambda=0$. If $\lambda = 0$, then the equation $W(T,e_3,e_3,T)=0$ with $\lambda = 0$ yields $(c_1-c_2)^2 = 0$; but any $T$ with $c_1=c_2$ cannot be timelike. On the other hand, if $c_3 = 0$, then
$$
\underbrace{\,c_4(c_1+(2\lambda-1)c_2) = 0\,}_{\text{$W(T,e_2,e_4,T)=0$}} \commas \underbrace{\,(c_1-c_2)((2\lambda+1)c_1+(2\lambda-1)c_2) = 0\,}_{\text{$W(T,e_4,e_4,T)=0$}}.
$$
Since $c_1 \neq c_2$, the second equation yields $(2\lambda-1)c_2 = -(2\lambda+1)c_1$. Inserting this into the first gives $c_1c_4\lambda=0$, hence $c_4 = 0$. Finally, inserting this into $W(T,e_2,e_2,T) = 0$ yields $c_1^2\lambda = 0$, an impasse once again. Thus the cases $\lambda = 0$ and $c_3 = 0$ in $W(T,e_3,e_4,T) = -2c_3c_4\lambda=0$ have now been established; the $c_4 = 0$ case is similar to $c_3=0$ (now with $W(T,e_3,e_3,T)$ and $W(T,e_2,e_3,T)$ replacing $W(T,e_4,e_4,T)$ and $W(T,e_2,e_4,T)$).
\end{proof}

\section{The Lorentzian case}
\label{sec:Lor}
We now go in the opposite direction, starting from a Lorentzian 4-manifold $(M,\gL)$. Here we shall need $T$ to be timelike from the outset: $\gL(T,T) < 0$. Our first task will be to establish the analogue of Proposition \ref{thm:onlyI}. To that end, let us briefly define the \emph{Petrov Types} from general relativity.  Recall the complex-linear map $\hat{W}_{\scalebox{0.4}{$L$}}\colon\cx\lra\cx$, defined via $\ipl{\hat{W}_{\scalebox{0.4}{$L$}}(\ww{v}{w})}{\ww{x}{y}} \defeq \WL(v,w,x,y)$ and the identification $i\xi = \hsl\xi$ making $\Lambda^2$ intro a three-dimensional complex vector space $\cx$ (note that $\WL$ automatically commutes with $\hsl$, as the former is $\gL$-trace-free). An oriented Lorentzian 4-manifold $(M,\gL)$ with nonzero Weyl tensor has \emph{Petrov Type I, D, II, N, or III} at $p\in M$ if $\hat{W}_{\scalebox{0.4}{$L$}}\colon \cx \lra \cx$ has 3 (I or D), 2 (II or N), or 1 (III) linearly independent complex eigenvectors at $p$, with I and D having 3 and 2 distinct complex eigenvalues, respectively, and II and N having 2 and 1 distinct eigenvalues, respectively; see \cite[Lemma~5.4.2~p.~313]{o1995}. (Indeed, Definition \ref{def:Ptype} above is simply the mirror of this definition, with $(M,g,\coww)$ in place of $(M,\gL,\hat{W}_{\scalebox{0.4}{$L$}})$.) Petrov Type D includes the \emph{Kerr metric} modeling a rotating black hole, while Petrov Type N includes \emph{pp-wave spacetimes} modeling gravitational waves; see \cite{o1995,stephani,AMS}.

\begin{thm}
\label{prop:N}
If a Lorentzian 4-manifold with nonzero Weyl tensor $\WL$ satisfies $\WL(T,\cdot,\cdot,T) = 0$ for a timelike vector $T$, then it cannot have Petrov Types II, N, or III.
\end{thm}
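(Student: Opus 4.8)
The plan is to prove Theorem \ref{prop:N} by running the argument of Theorem \ref{thm:onlyI} essentially verbatim. The key realization is that the exclusion of Petrov Types II, N, and III there was a purely \emph{algebraic} consequence of three facts about the Weyl endomorphism: that it is \emph{$\ipl{\,}{}$}-self-adjoint, that it is $\gL$-trace-free (equivalently, commutes with $\hsl$), and that its defining tensor satisfies the condition $\WL(T,\cdot,\cdot,T)=0$ for a $\gL$-timelike $T$. Since $\WL$ is a genuine Weyl tensor of $(M,\gL)$, its endomorphism $\cowl$ automatically enjoys the first two properties (as noted just before Theorem \ref{prop:N}), so nothing is lost by replacing the Riemannian-origin endomorphism $\coww$ of Theorem \ref{thm:onlyI} with $\cowl$.

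First I would invoke the Jordan-normal-form classification of $\gL$-trace-free, \emph{$\ipl{\,}{}$}-self-adjoint endomorphisms that commute with $\hsl$ (i.e.\ complex-linear maps of $\cx$), which is exactly what produces the normal forms \eqref{eqn:Kerr}--\eqref{eqn:Kerr3}. If $\cowl$ had Petrov Type III at $p$, there is a $\gL$-orthonormal basis $\{e_1,e_2,e_3,e_4\}$ with $e_1$ timelike, relative to which $\cowl$ is given by \eqref{eqn:Kerr3}, whose only nonzero Weyl components are \eqref{eqn:III*}. Writing $T=\sum_i c_ie_i$ and expanding $\WL(T,e_j,e_k,T)=\sum_{i,l}c_ic_l W_{ijkl}$ exactly as in Theorem \ref{thm:alg} (now with $g_{11}=-1$), the equations $\WL(T,e_1,e_2,T)=0$ and $\WL(T,e_2,e_3,T)=0$ combine to give $(c_1-c_3)^2=0$. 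Hence $c_1=c_3$, which forces $\gL(T,T)=-c_1^2+c_2^2+c_3^2+c_4^2=c_2^2+c_4^2\ge 0$, contradicting the timelikeness of $T$; thus Type III is excluded. For Types II and N I would use \eqref{eqn:Kerr2}, starting from $\WL(T,e_3,e_4,T)=-2c_3c_4\lambda=0$ and performing the same case split ($\lambda=0$, $c_3=0$, $c_4=0$) carried out in Theorem \ref{thm:onlyI}; in each branch the surviving equations force an equality among the $c_i$ (e.g.\ $c_1=c_2$ when $\lambda=0$) that again makes $\gL(T,T)\ge 0$, an impasse. Type N is the special case $\lambda=\mu=0$ of \eqref{eqn:Kerr2} and is excluded \emph{a fortiori}.

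I expect the only real bookkeeping to be the Type II case analysis, the one branch where the constraint is not immediate: there one must track how $\WL(T,e_2,e_4,T)=0$ and $\WL(T,e_4,e_4,T)=0$ interact to drive $c_4$, and then $c_1$, to zero. However, I do not anticipate a genuine obstacle, since this is precisely the computation already completed in Theorem \ref{thm:onlyI}. The substantive point is conceptual rather than computational: the normal forms \eqref{eqn:Kerr}--\eqref{eqn:Kerr3}, although first derived for the endomorphism $\coww$ attached to a \emph{Riemannian} metric, depend only on $\cowl$ being $\gL$-trace-free and $\hsl$-commuting, and hence apply unchanged to the Weyl tensor of the \emph{Lorentzian} metric $\gL$. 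Once that is granted, Theorem \ref{prop:N} follows by the same word-for-word reasoning, the timelikeness of $T$ playing in each case the role that the timelike direction $e_1$ played in Theorem \ref{thm:onlyI}.
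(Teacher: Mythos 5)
Your proposal is correct and matches the paper's own proof, which simply states that the argument of Theorem \ref{thm:onlyI} goes through verbatim with $\WL,\cowl$ in place of $W,\coww$; you have merely spelled out the details (the normal forms \eqref{eqn:Kerr2}--\eqref{eqn:Kerr3}, the case split, and why $c_1=c_3$ or $c_1=c_2$ contradicts timelikeness of $T$) and correctly identified the conceptual point that those normal forms depend only on $\gL$-trace-freeness and commutation with $\hsl$.
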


\begin{proof}
This proof goes through exactly as in Theorem \ref{thm:onlyI}, with $\WL,\hat{W}_{\scalebox{0.4}{$L$}}$ in place of $W,\coww$, respectively.
\end{proof}

We now proceed to classifying those Lorentzian 4-manifolds that can satisfy $\WL(T,\cdot,\cdot,T) = 0$, by using $T$ to define a Riemannian metric $g \defeq \gL + 2T^{\flat} \otimes T^{\flat}$.  The first step toward this classification is the analogue of Proposition \ref{prop:W1}:

\begin{prop}
\label{prop:WT1}
Let $(M,\gL)$ be an oriented Lorentzian 4-manifold with Weyl tensor $\WL$ and let $T$ be a unit-length timelike vector field on $M$. With respect to the Riemannian metric \emph{$g \defeq \gL + 2T^{\flat} \otimes T^{\flat}$}, with $T^{\flat} \defeq \gL(T,\cdot)$, the linear endomorphism $\cowT\colon \Lambda^2 \lra \Lambda^2$ defined by
\beqa
\label{eqn:WL}
\text{\emph{$\ipr{\cowT(v\wedge w)}{x\wedge y} \defeq \WL(v,w,x,y)$}}\hspace{.2in}\text{for all $v,w,x,y \in T_pM$}
\eeqa
commutes with the Hodge star \emph{$\hsr\colon \Lambda^2 \lra \Lambda^2$} of $g \iff \WL(T,\cdot,\cdot,T) = 0$. 
\end{prop}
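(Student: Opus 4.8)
The plan is to run the proof of Proposition \ref{prop:W1} in mirror image, interchanging the roles of the two metrics. There the endomorphism was induced by $g$ and the Hodge star by $\gL$; here the endomorphism $\cowT$ is induced by $g$ while both the structural constraint we exploit ($\gL$-trace-freeness) and the operator $\hsl$ originate from $\gL$. First I would fix a $\gL$-orthonormal basis $\{T = e_1, e_2, e_3, e_4\}$ with $e_1 = T$ timelike. Since $g = \gL + 2T^{\flat}\otimes T^{\flat}$ gives $g(T,T) = -\gL(T,T) = 1$ and agrees with $\gL$ on $T^{\perp}$, this same frame is $g$-orthonormal, now with all four vectors spacelike. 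Relative to the associated basis \eqref{eqn:Hbasis0} for $\Lambda^2$, which is $\ipr{\,}{}$-orthonormal, the matrix of $\cowT$ is assembled from the raw components $\WL_{ijkl}$ exactly as in \eqref{eqn:WW}, yielding a symmetric block matrix $\begin{bmatrix} A & B\\ B^t & D\end{bmatrix}$ with $A,D$ symmetric.

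The crucial structural step is to pin down this block matrix using $\gL$-trace-freeness. Because $\WL$ is the Weyl tensor of $\gL$, its $\gL$-induced endomorphism $\cowl$ commutes with $\hsl$, as noted at the start of Section \ref{sec:Lor}; relative to \eqref{eqn:Hbasis0}, where $\hsl$ has the block form \eqref{eqn:Hbasis}, this forces the shape $\cowl = \begin{bmatrix} P & Q\\ -Q & P\end{bmatrix}$ with $P,Q$ symmetric, in complete analogy with how $g$-trace-freeness produced \eqref{eqn:WW}. I would then compare the two endomorphisms of the single tensor $\WL$: since \eqref{eqn:Hbasis0} is $\ipr{\,}{}$-orthonormal but only $\ipl{\,}{}$-orthonormal up to the sign $\ipl{e_1 \wedge e_i}{e_1 \wedge e_i} = -1$ for $i=2,3,4$, the matrix of $\cowT$ is obtained from that of $\cowl$ by negating its first three rows. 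Hence $\cowT = \begin{bmatrix} -P & -Q\\ -Q & P\end{bmatrix}$, so that unconditionally $D = -A$ and $B$ is symmetric\,---\,the exact mirror of the passage from \eqref{eqn:WW} to \eqref{eqn:WW2}.

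With $\cowT$ in this form, the conclusion is a short block computation of the same type as \eqref{eqn:A=O}: using $\hsr = \begin{bmatrix} O & I\\ I & O\end{bmatrix}$ from \eqref{eqn:HbasisR}, one gets $\cowT\,\hsr - \hsr\,\cowT = \begin{bmatrix} O & -2P\\ 2P & O\end{bmatrix}$, which vanishes precisely when $P = O$, i.e.\ when $A = O$. Finally I would identify $A = O$ with the hypothesis: the entries of $A$ are the components $\WL_{1i1j} = -\WL(T,e_i,e_j,T)$, which all vanish exactly when $\WL(T,\cdot,\cdot,T) = 0$, the remaining index $e_1 = T$ contributing nothing by antisymmetry. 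The one place demanding care\,---\,and the genuine content beyond bookkeeping\,---\,is this structural step: correctly transporting the $\gL$-trace-free form of $\cowl$ into the $\ipr{\,}{}$-orthonormal matrix of $\cowT$ via the row-sign change, since it is precisely this reversal (endomorphism from $g$, trace-freeness from $\gL$) that distinguishes the argument from its Riemannian counterpart. Once that transfer is secured, everything else is the same $2\times 2$ block algebra as in Proposition \ref{prop:W1}.
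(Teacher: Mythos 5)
Your proposal is correct and follows essentially the same route as the paper's own proof: both derive the block form $\cowT = \left[\begin{smallmatrix} A & B\\ B & -A\end{smallmatrix}\right]$ by first using that $\cowl$ commutes with $\hsl$ (since $\WL$ is $\gL$-trace-free) and then negating the first three rows to pass from the $\ipl{\,}{}$-matrix to the $\ipr{\,}{}$-matrix, before finishing with the $2\times2$ block commutator against \eqref{eqn:HbasisR}. The paper compresses this into a single parenthetical remark, whereas you spell out the transfer step explicitly, but the argument is the same.
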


\begin{proof}
With respect to a $\gL$- and $g$-orthonormal basis $\{T\defeq e_1,e_2,e_3,e_4\}$, and setting $W_{ijkl} \defeq \WL(e_i,e_j,e_k,e_l)$, the operator $\cowT$ is given by 
$$
\cowT = \begin{bmatrix}
W_{1212} & W_{1312} & W_{1412} & W_{3412} & W_{4212} & W_{2312}\\
W_{1213} & W_{1313} & W_{1413} & W_{3413} & W_{4213} & W_{2313}\\
W_{1214} & W_{1314} & W_{1414} & W_{3414} & W_{4214} & W_{2314}\\
W_{1234} & W_{1334} & W_{1434} & W_{3434} & W_{4234} & W_{2334}\\
W_{1242} & W_{1342} & W_{1442} & W_{3442} & W_{4242} & W_{2342}\\
W_{1223} & W_{1323} & W_{1423} & W_{3423} & W_{4223} & W_{2323}
\end{bmatrix}= \begin{bmatrix}
A & B\\
B & -A
\end{bmatrix}\cdot\nonumber
$$
(Entry-by-entry, $\hat{W}_{\scalebox{0.4}{$L$}}$ has the same appearance as \eqref{eqn:WW2*}, though with $\WL$ in place of $W$; since $\hat{W}_{\scalebox{0.4}{$L$}}$ (always) commutes with $\hsl$, its block form is $\hat{W}_{\scalebox{0.4}{$L$}} =$ {\tiny $\begin{bmatrix}
-A & -B\\
B & -A
\end{bmatrix}$}. As a consequence, $\cow$ takes the block form $\cowT =$ {\tiny $\begin{bmatrix}
A & B\\
B & -A
\end{bmatrix}$}.) Once again, $A=O$ when $\WL(T,\cdot,\cdot,T) = 0$, and this occurs if and only if $\cowT$ commutes with the Hodge star $\hsr$ of $g$ (recall \eqref{eqn:HbasisR}).
\end{proof}

\begin{defn}[$g$-quadratic form of $\WL$]
\label{def:RiemT}
Let $(M,g)$ be an oriented Lorentzian 4-manifold whose Weyl tensor $\WL$ satisfies $W(T,\cdot,\cdot,T) = 0$ for some unit-length timelike vector field $T$ on $M$.  Then the function \emph{$\WTsec$}, defined on each 2-plane $P \subseteq T_pM$ by
\beqa
\label{def:RT1}
\text{\emph{$\WTsec$}}(P) \defeq \text{\emph{$\ipr{\cowT P}{P}$}},
\eeqa
with $\cowT$ given by \eqref{eqn:WL}, is the \emph{$g$-quadratic form of the Weyl tensor $\WL$ of $\gL$}.
\end{defn}

We now arrive at the analogue of Theorem \ref{thm:n2}:

\begin{thm}
\label{thm:n2R}
Let $(M,g)$ be an oriented Lorentzian 4-manifold whose Weyl tensor $\WL$ satisfies $W(T,\cdot,\cdot,T) = 0$ for some unit-length timelike vector field $T$ on $M$. Then at each $p\in M$, $\WL$ is completely determined by the critical points and values of \emph{$\WTsec$}.
\end{thm}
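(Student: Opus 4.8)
The plan is to observe that the hypotheses turn $\cowT$ into a bona fide Riemannian curvature-type operator, and then to run the Berger--Thorpe argument of \cite{berger,thorpe2} on it essentially verbatim. By Proposition \ref{prop:WT1}, the condition $\WL(T,\cdot,\cdot,T)=0$ forces $A=O$, so relative to a $g$-orthonormal basis $\{T=e_1,e_2,e_3,e_4\}$ the operator has block form $\cowT = \left[\begin{smallmatrix} O & B \\ B & O\end{smallmatrix}\right]$ with $B$ symmetric. Thus $\cowT$ is $\ipr{\,}{}$-self-adjoint, trace-free, and satisfies the first Bianchi identity (all inherited from $\WL$; the Bianchi identity gives $\mathrm{tr}\,B=0$ exactly as in \eqref{eqn:Bid}), and it commutes with the Riemannian Hodge star $\hsr$. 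These are precisely the algebraic inputs of \cite{berger,thorpe2}, and $\WTsec(P)=\ipr{\cowT P}{P}$ is exactly the associated ``sectional-curvature function.''

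First I would use $[\cowT,\hsr]=0$ to split $\cowT = W^+\oplus W^-$ over $\Lambda^2 = \Lambda^+\oplus\Lambda^-$, and note that on $G_p = \{\xi^++\xi^- : \ipr{\xi^\pm}{\xi^\pm}=1/2\}\cong S^2\times S^2$ the form decouples, $\WTsec(\xi^++\xi^-) = \ipr{W^+\xi^+}{\xi^+} + \ipr{W^-\xi^-}{\xi^-}$, since $\Lambda^+\perp\Lambda^-$. Hence its critical points are exactly the 2-planes $P=\xi^++\xi^-$ with $\xi^+$ an eigenvector of $W^+$ and $\xi^-$ an eigenvector of $W^-$, with critical value $\tfrac12(a+b)$, where $a,b$ are the respective eigenvalues. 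Diagonalizing $W^\pm$ then produces a $g$-orthonormal basis realizing the normal form \eqref{eqn:WW1}, the eigenvalues of $W^\pm$ being $\lambda_i\pm\mu_i$ and satisfying $\sum_i\lambda_i=\sum_i\mu_i=0$ by \eqref{eqn:Bid}.

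It remains to recover all of the $\lambda_i,\mu_i$ from the critical data alone. The point is that each critical 2-plane $P$ carries its own $\Lambda^\pm$-components, recoverable as $\xi^\pm = \tfrac12(P\pm\hsr P)$; so the critical points themselves tell us which eigendirection of $W^+$ is paired with which of $W^-$, thereby labeling the two-way table of critical values $c_{ij}=\tfrac12(a_i+b_j)$. Using $\sum_i a_i=\sum_j b_j=0$ one inverts the table, $a_i = \tfrac{2}{3}\sum_j c_{ij}$ and $b_j=\tfrac{2}{3}\sum_i c_{ij}$, which determines $W^\pm$, hence $\cowT$, hence $\WL$ (the metric $g=\gL+2T^{\flat}\otimes T^{\flat}$ being fixed). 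Because $g$ is Riemannian, every 2-plane is nondegenerate and $G_p$ is the full compact Grassmannian, so all of these critical values are genuinely available; this is what makes the recovery complete, in contrast to the honestly Lorentzian Petrov case, where null directions limit the critical set and the Hessian of the quadratic form is also needed.

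The main obstacle will be the degenerate configurations, where $W^+$ or $W^-$ has a repeated eigenvalue: then the corresponding eigendirection is no longer unique, the critical set of $\WTsec$ becomes a positive-dimensional family of 2-planes, and the clean labeling above breaks down. Showing that the geometry of the (now infinite) critical set together with the finitely many critical values still pins down $W^\pm$ is exactly the delicate case analysis of \cite{berger,thorpe2}, which I would import step-by-step; the only structural change is that the underlying 4-tensor $\WL$ originates from the Lorentzian metric $\gL$, whereas the operator $\cowT$ and the Hodge star $\hsr$ with which it commutes both originate from the Riemannian $g$.
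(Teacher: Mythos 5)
Your proposal is correct and takes essentially the same route as the paper: the paper's proof simply observes that $\cowT$ is $\ipr{\,}{}$-self-adjoint and commutes with $\hsr$ and then invokes \cite{berger,thorpe2} wholesale, which is exactly the Berger--Thorpe argument you unpack (splitting $\cowT=W^+\oplus W^-$, decoupling $\WTsec$ over $G_p\cong S^2\times S^2$, and recovering the eigenvalues from the table of critical values using $\mathrm{tr}\,A=\mathrm{tr}\,B=0$). Your version is a faithful expansion of the citation, including the correct deferral of the repeated-eigenvalue cases to the case analysis of \cite{thorpe2}.
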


\begin{proof}
The proof is identical to \cite[Theorems~2.1,~2.2]{thorpe2} (recall \eqref{eqn:WW1} and \eqref{eqn:Bid}), the only difference being that $\cowT$ is not the curvature operator $\co$ of $g$ used in \cite{thorpe2}. (The proof in \cite{thorpe2} still goes through here because $\cowT$ is $\ipr{\,}{}$-self-adjoint and commutes with $\hsr$.)
\end{proof}

\section*{Appendix: Details of the proof of Theorem \ref{thm:n2}}
The key ingredients in the proof of Theorem \ref{thm:n2} are the following canonical bases associated to each Petrov Type:
\begin{prop}
\label{thm:JordanPcoo}
Let $\coww\colon\cx\lra\cx$ be the complex-linear map of Definition \ref{def:Ptype}. Then at each $p \in M$, the following is true\emph{:}
\begin{enumerate}[leftmargin=*]
\item[1.] For Petrov Types I or D, \emph{$\coww$} has a basis of orthogonal spacelike 2-planes.
\item[2.] For Petrov Types II or N, \emph{$\coww$} has one spacelike and one lightlike 2-plane as eigenvectors, orthogonal to each other.
\item[3.] For Petrov Type III, the one linearly independent eigenvector of \emph{$\coww$} is necessarily a lightlike 2-plane.
\end{enumerate}
\end{prop}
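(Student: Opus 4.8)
The plan is to follow Thorpe's method, transporting the eigenvalue problem for the complex-linear map $\coww$ onto the geometry of the complex-symmetric bilinear form that $\ipl{\,}{}$ and $\hsl$ jointly induce on $\cx$. First I would introduce the form
$$
Q(\xi,\eta) \defeq \ipl{\xi}{\eta} - i\,\ipl{\hsl\xi}{\eta}, \qquad \xi,\eta \in \cx,
$$
and check that, because $\hsl$ is $\ipl{\,}{}$-self-adjoint with $\hsl^2 = -\text{id}$, the form $Q$ is complex-bilinear ($Q(\hsl\xi,\eta) = i\,Q(\xi,\eta)$), complex-symmetric, and nondegenerate. Since $\coww$ is $\ipl{\,}{}$-self-adjoint and commutes with $\hsl$ (Proposition \ref{prop:W1}), it is $Q$-self-adjoint. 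From this I would extract the two standard facts that drive everything: eigenvectors for distinct eigenvalues are $Q$-orthogonal, and the leading eigenvector $\xi$ at the top of any Jordan block of size $\geq 2$ is $Q$-null, i.e.\ $Q(\xi,\xi) = 0$ — the latter by comparing $Q(\coww\zeta,\xi)$ with $Q(\zeta,\coww\xi)$ for the generalized eigenvector $\zeta$ satisfying $\coww\zeta = \alpha\zeta + \xi$.

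The next step is the geometric dictionary between the algebraic character of an eigendirection and the causal character of a 2-plane. Here the key is the Plücker criterion: a nonzero $\eta \in \Lambda^2$ is decomposable $\iff \eta\wedge\eta = 0 \iff \ipl{\hsl\eta}{\eta} = 0$, which says exactly that $Q(\eta,\eta)$ is real; and for such $\eta$, $Q(\eta,\eta) = \ipl{\eta}{\eta} = \vepo(\eta)$ records whether the 2-plane is spacelike, timelike, or lightlike. I would then show that a non-null eigendirection always contains a spacelike 2-plane: writing a representative as $\zeta = (a + b\hsl)\xi$ and using $Q(\zeta,\zeta) = (a+bi)^2 Q(\xi,\xi)$, the complex scalar $(a+bi)^2$ ranges over all of $\mathbb{C}^{\times}$, so it may be chosen to make $Q(\zeta,\zeta)$ any prescribed positive real, forcing $\zeta$ decomposable and spacelike. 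By contrast, if $Q(\xi,\xi) = 0$, then $Q(\zeta,\zeta) = 0$ for every representative, so the entire eigendirection consists of lightlike 2-planes.

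With the dictionary in hand the classification is largely bookkeeping on Jordan type. For Types I and D the map is diagonalizable; eigenvectors for distinct eigenvalues are $Q$-orthogonal, and nondegeneracy of $Q$ forces each to be non-null (a null vector orthogonal to a spanning set would lie in the radical), so each eigendirection yields a spacelike 2-plane and these are $\ipl{\,}{}$-orthogonal. For Type II the Jordan type is $\{2,1\}$ with distinct eigenvalues $\lambda,-2\lambda$ ($\lambda\neq 0$): the leading vector of the $2$-block is null, hence a lightlike 2-plane, while the eigenvector of the $1$-block is $Q$-orthogonal to it (distinct eigenvalues) and, by nondegeneracy, non-null — a spacelike 2-plane orthogonal to the lightlike one. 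For Type III the single eigenvector tops a $3\times 3$ nilpotent block, hence is null, giving the lone lightlike 2-plane.

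The step I expect to be the main obstacle is Type N, where the sole eigenvalue is $0$ — so the distinct-eigenvalue orthogonality that settled Types I and II is unavailable — and I must still produce a non-null eigenvector orthogonal to the null one. The leading vector $\xi$ of the size-$2$ block is null and $Q$-orthogonal to all of $\ker\coww$ (comparing $Q(\coww\zeta,w)$ with $Q(\zeta,\coww w)$ for $w\in\ker\coww$ gives $\xi\perp\ker\coww$). The resolution is a dimension count: a nondegenerate complex-symmetric form on $\mathbb{C}^3$ has maximal totally isotropic subspaces of dimension $1$, so the $2$-dimensional $\ker\coww = \text{span}\{\xi,\eta\}$ cannot be totally isotropic; since $Q(c_1\xi + c_2\eta,\,c_1\xi + c_2\eta) = c_2^2\,Q(\eta,\eta)$ by $Q(\xi,\xi)=Q(\xi,\eta)=0$, this forces $Q(\eta,\eta)\neq 0$, so $\eta$ is the required non-null (spacelike) eigenvector, automatically $Q$-orthogonal to $\xi$. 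Throughout, the only genuinely new wrinkle relative to \cite{thorpe} is that the Lorentzian structure on $\Lambda^2$ now descends from $\gL$ while $\coww$ is built from the Riemannian $W$; but since the argument uses only $Q$-self-adjointness and commutation with $\hsl$, both guaranteed by $W(T,\cdot,\cdot,T) = 0$, Thorpe's proof transfers essentially verbatim.
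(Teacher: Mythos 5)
Your proposal is correct and follows essentially the same route as the paper: both work with the complex-symmetric form $\ggL(\xi,\eta) = \ipl{\xi}{\eta} - i\ipl{\xi}{\hsl\eta}$ on $\cx$, use $\ggL$-self-adjointness of $\coww$ to obtain nullity of the leading Jordan vector and orthogonality across distinct eigenvalues, and invoke nondegeneracy to force the remaining eigenvectors to be non-null, hence rescalable to spacelike 2-planes. The only immaterial differences are that you derive the decomposability/causal-character dictionary directly via the Pl\"ucker criterion where the paper cites O'Neill, and in Type N you rule out a totally isotropic $\ker\coww$ by a Witt-index dimension count where the paper exhibits an explicit radical vector $\beta = \alpha_1\xi_1+\alpha_2\xi_2$ --- the same contradiction.
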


\begin{proof}
(This proof is identical \cite[Theorem,~p.~3]{thorpe} (see also \cite[Remark~(ii),~p.~4]{thorpe}), though we write it out in detail here since our operator $\coww$ is not the Lorentzian curvature operator to be found therein, since our presentation differs from \cite{thorpe} in that we omit the derivation of the Jordan forms \eqref{eqn:Kerr}, and finally, since we have separated Types D and N from I and II, respectively, for the sake of Theorem \ref{thm:n2} below.)  The key to finding the bases alleged is to work with the following complex scalar product on the three-dimensional complex vector space $\cx$:
$$
\ggL(\xi,\eta) \defeq \ipl{\xi}{\eta} - i\ipl{\xi}{\hsl \eta}
$$
Note that $\coww$ is $\ggL$-self-adjoint, because it is $\ipl{\,}{}$-self-adjoint and commutes with $\hsl$. Note also that $\xi \in \Lambda^2$ corresponds to a nondegenerate 2-plane in $T_pM$ if and only if $\ggL(\xi,\xi) = \pm1$, since
\beqa
\label{eqn:Pbasis}
\ggL(\xi,\xi) = \pm1 \iff \ipl{\xi}{\xi} = \pm1~~\text{and}~~\ipl{\xi}{\hsl\,\xi} = 0 \iff \xi \in G_\pm(p).
\eeqa
(For proofs of these facts, consult \cite[pp.~306 \& 362]{o1995}.)
\vskip 6pt
\underline{Petrov Type I or D at $p$}:~Let $\{\xi_1,\xi_2,\xi_3\} \subseteq \cx$ be a basis of eigenvectors for $\coww$, with corresponding eigenvalues $\lambda_1,\lambda_2,\lambda_3 \in \mathbb{C}$.  If $\lambda_i  \neq \lambda_j$, then $\ggL(\xi_i,\xi_j) =  0$ because $\coww$ is $\ggL$-self-adjoint. This immediately implies that if $\lambda_1,\lambda_2,\lambda_3$ are distinct and some $\ggL(\xi_j,\xi_j) = 0$, then that $\xi_j$ will be $\ggL$-orthogonal to \emph{all} 2-vectors in $\cx$\,---\,impossible as $\ggL$ is nondegenerate.  Thus each $\ggL(\xi_j,\xi_j) \neq 0$ if $\lambda_1,\lambda_2,\lambda_3$ are distinct, If the latter is true, then we may choose $\alpha_1,\alpha_2,\alpha_3 \in  \mathbb{C}$ so that $\ggL(\alpha_j\xi_j,\alpha_j\xi_j) = +1$; i.e., so that each eigenvector is a spacelike 2-plane, by \eqref{eqn:Pbasis}.  (Such  complex scalar multiplication will neither change their status as eigenvectors of $\coww$ nor change their eigenvalues.) This takes care of the case when $\lambda_1,\lambda_2,\lambda_3$ are distinct.  Suppose now that two of them are equal, say $\lambda_1 = \lambda_2 \neq \lambda_3$ (as we are still within the case of Petrov Type I, we still have a basis $\{\xi_1,\xi_2,\xi_3\}$ of eigenvectors).  Then $\ggL(\xi_3,\xi_3) \neq 0$, by the same reasoning as above.  Consider now the two-dimensional $\lambda_1$-eigenspace; it, too, must be $\ggL$-nondegenerate, hence must contain a $\ggL$-orthogonal basis.  We thus have a $\ggL$-orthogonal basis $\{\xi_1,\xi_2,\xi_3\}$ of eigenvectors of $\coww$, no element of which is $\ggL$-lightlike; hence, as before, each can be scaled to be a spacelike 2-plane.  Finally, the case when all three eigenvalues are equal, $\lambda_1=\lambda_2=\lambda_3$, is trivial because then every 2-vector is an eigenvector of $\coww$.
\vskip 6pt
\underline{Petrov Type II or N at $p$}:~Given the two linearly independent eigenvectors $\{\xi_1,\xi_2\} \subseteq \cx$ with, say, $\xi_1$ having the larger algebraic multiplicity, there exists a 2-vector $\eta$ satisfying $(\coww - \lambda_1 I)\eta = \xi_1$. Then
\beqa
\label{eqn:a_null}
\ggL(\xi_1,\xi_1) = \ggL((\coww-\lambda_1 I)\eta,\xi_1) = \ggL(\eta,(\coww-\lambda_1 I)\xi_1) = 0,
\eeqa
so that $\xi_1$ must be $\ggL$-lightlike.  But this is the case if and only if 
$\ipl{\xi_1}{\xi_1} = \ipl{\xi_1}{\hsl\,\xi_1} = 0$, which is the case if and only if $\xi_1$ is a lightlike 2-plane in $T_pM$; i.e., of the form $\xi_1 = \ww{v}{w}$ and $\gL$-degenerate.  Consider now $\xi_2$.  We claim that it must be $\ggL$-orthogonal to $\xi_1$.  Indeed, if $\lambda_1 \neq \lambda_2$, then this follows from the $\ggL$-self-adjointness of $\coww$; if $\lambda_1 = \lambda_2$, then
$$
\ggL(\xi_1,\xi_2) = \ggL((\coww-\lambda_1 I)\eta,\xi_2) = \ggL(\eta,(\coww-\lambda_1 I)\xi_2) = 0.
$$
Now suppose $\ggL(\xi_2,\xi_2) = 0$ also, so that $\xi_1,\xi_2$ are $\ggL$-orthogonal and lightlike.  Because $\ggL$ is nondegenerate and $\{\xi_1,\xi_2,\eta\}$ is a basis, each $\ggL(\xi_j,\eta) \neq 0$. 
And yet, because any 2-vector $\beta \defeq \alpha_1\xi_1+\alpha_2\xi_2$ will be $\ggL$-lightlike and orthogonal to $\xi_1, \xi_2$, we may choose $\alpha_1,\alpha_2 \in \mathbb{C}$ so that $\ggL(\beta,\eta) = 0$, contradicting the nondegeneracy of $\ggL$.  Therefore we must have $\ggL(\xi_2,\xi_2) \neq 0$, in which case we may scale it to satisfy  $\ggL(\xi_2,\xi_2) = +1$.
\vskip 6pt
\underline{Petrov Type III at $p$}:~Let $\xi_1 \in \cx$ denote $\coww$'s lone linearly independent eigenvector, with eigenvalue $\lambda_1$.  Once again, there is a 2-vector $\eta$ satisfying $(\coww - \lambda_1 I)\eta = \xi_1$, so that, via \eqref{eqn:a_null}, $\xi_1$ is necessarily a lightlike 2-plane.
\end{proof}

To connect these bases with $\Wsec$, we will need to characterize its critical points in terms of $\coww$:

\begin{prop}
\label{prop:crit2}
Let \emph{$\Wsec$} be given by \eqref{def:T_seccoo1} of Definition \ref{def:T_sec0coo1}. Then at any $p \in M$, a 2-plane $P \in G_\pm(p)$ is a critical point of \emph{$\Wsec$} if and only if 
$$
\emph{$\coww$} P = aP + b (\hsl P)
$$
for some $a,b  \in \RR$.
\end{prop}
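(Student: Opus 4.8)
The plan is to carry out the constrained-critical-point (Lagrange multiplier) computation of \cite{thorpe}, whose only inputs are that $\coww$ and $\hsl$ are both $\ipl{\,}{}$-self-adjoint and that $\hsl^2=-\mathrm{id}$. First I would use \eqref{eqn:Pbasis} to realize the domain $G_+(p)\cup G_-(p)$ of $\Wsec$ as the common zero set of the two smooth functions $c_1(\xi)\defeq\ipl{\xi}{\xi}\mp 1$ (the sign chosen according to which of $G_\pm(p)$ we are on) and $c_2(\xi)\defeq\ipl{\xi}{\hsl\xi}$ on $\Lambda^2$. Since $\vepo(P)=\ipl{P}{P}$ equals $+1$ on $G_+(p)$ and $-1$ on $G_-(p)$, it is locally constant on the domain, so $\Wsec$ and the simpler function $q(\xi)\defeq\ipl{\coww\xi}{\xi}$ have exactly the same critical points there. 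It therefore suffices to find the critical points of $q$ restricted to this constraint set.

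Next I would compute $\ipl{\,}{}$-gradients. Because $\coww$ is self-adjoint, the differential of $q$ at $\xi$ is $v\mapsto 2\ipl{\coww\xi}{v}$, so $\nabla q=2\coww\xi$; trivially $\nabla c_1=2\xi$; and, using that $\hsl$ is $\ipl{\,}{}$-self-adjoint (which follows from the identity $\alpha\wedge\hsl\beta=\hsl\alpha\wedge\beta$ for $2$-forms together with $\hsl^2=-\mathrm{id}$), the differential of $c_2$ at $\xi$ is $v\mapsto 2\ipl{\hsl\xi}{v}$, so $\nabla c_2=2\hsl\xi$. The constraint set is a genuine submanifold because $\nabla c_1$ and $\nabla c_2$ are linearly independent over $\RR$: if $\hsl\xi$ were a real multiple of $\xi$, then $\hsl^2=-\mathrm{id}$ would force that multiple to square to $-1$, which is impossible. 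Hence Lagrange multipliers apply at every point of the domain.

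Finally, $P$ is a critical point of $q$ on the constraint manifold if and only if $\nabla q$ lies in the normal space $\mathrm{span}_{\RR}(\nabla c_1,\nabla c_2)=\mathrm{span}_{\RR}(P,\hsl P)$; that is, $\coww P=aP+b(\hsl P)$ for some $a,b\in\RR$, which is exactly the assertion. Equivalently, the tangent space to the domain at $P$ is $\{v:\ipl{v}{P}=\ipl{v}{\hsl P}=0\}$, and $P$ is critical iff $\ipl{\coww P}{v}=0$ for all such $v$; since $\ipl{\,}{}$ is nondegenerate on $\Lambda^2$, this is equivalent to $\coww P\in\mathrm{span}_{\RR}(P,\hsl P)$.

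The computation is routine; the only points requiring care\,---\,and the ones I would flag\,---\,are the Lorentzian-signature facts ($\hsl^2=-\mathrm{id}$ and the $\ipl{\,}{}$-self-adjointness of $\hsl$) and the verification that $G_+(p)\cup G_-(p)$ is a regular level set, so that the Lagrange condition is both necessary and sufficient. Neither is a serious obstacle, which is consistent with the paper's remark that the argument follows \cite{thorpe} step-by-step, the sole twist being that $\coww$ is assembled from the Weyl tensor of $g$ rather than from a genuine Lorentzian curvature operator.
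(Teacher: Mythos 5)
Your proposal is correct and is essentially the paper's argument: the paper simply defers to the Lemma on p.~5 of Thorpe (noting only that $\coww$ replaces the Lorentzian curvature operator), and that lemma is exactly the constrained-critical-point computation you carry out, resting on the same inputs ($\ipl{\,}{}$-self-adjointness of $\coww$ and $\hsl$, $\hsl^2=-\mathrm{id}$, and the characterization \eqref{eqn:Pbasis} of $G_\pm(p)$ as a regular level set). Your write-up just makes explicit the details the paper leaves to the citation.
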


\begin{proof}
This proof follows exactly as in \cite[Lemma,~p.~5]{thorpe}, except that we are working with $\coww$ in place of the Lorentzian curvature operator; see also \cite[Proposition~1]{aazami} for a more general case.
\end{proof}

In any case, we now have the ingredients needed to prove Theorem \ref{thm:n2}:

\begin{theorem-non}[Classification of Petrov Types]
Assume the hypotheses of Definition \ref{def:Ptype}.  Then at each $p \in M$, $g$ has
\begin{enumerate}[leftmargin=*]
\item[1.] Petrov Type I \emph{$\!\!\iff\!\! \Wsec$} has $n=3$ spacelike critical points.
\item[2.] Petrov Types D or N \emph{$\!\!\iff\!\! \Wsec$} has $n=\infty$ spacelike critical points.
\item[3.] Petrov Type II \emph{$\!\!\iff\!\! \Wsec$} has $n=1$ spacelike critical point.
\item[4.] Petrov Type III \emph{$\!\!\iff\!\! \Wsec$} has $n=0$ spacelike critical points.
\end{enumerate}
\end{theorem-non}

\begin{proof}
(Our proof differs from \cite[Theorem,~p.~5]{thorpe} in step 2, as we discuss below; also, note once again that we are working with $\coww$, not the Lorentzian curvature operator as in \cite{thorpe}.)  By Proposition \ref{prop:W1}, $\coww$ is a complex-linear map on $\cx$. By Proposition \ref{prop:crit2}, any $P \in G_{\pm}(p)$ will be a critical point of $\Wsec$ if and only if it is a complex eigenvector of $\coww$:
$$
\coww P = (a+ib)P = aP + b \hsl\!P.
$$
In particular, if $\Wsec$ has a spacelike critical point $P$\,---\,i.e., $P \in G_+(p)$\,---\,then it has a timelike critical point, namely, $iP = \hsl P \in G_-(p)$, since the latter is, of course, an eigenvector of $\coww$ if $P$ was. Thus if $\Wsec$ has, say, no spacelike critical points (hence no timelike critical points, either), then no nondegenerate 2-planes can be eigenvectors of $\coww$; i.e., $\coww$ can only have $\ggL$-lightlike eigenvectors.  By Proposition \ref{thm:JordanPcoo}, only Petrov Type III satisfies this criterion. Likewise, if $\Wsec$ has precisely one spacelike critical point (hence precisely one timelike critical point), then $\coww$ can only have one eigenvector from $G_{+}(p) \subseteq \cx$.  By Proposition \ref{thm:JordanPcoo}, only Petrov Type II satisfies this criterion.  Similarly, if $\Wsec$ has three spacelike critical points, then $\coww$ must have three spacelike 2-planes as eigenvectors.  By Proposition \ref{thm:JordanPcoo}, only Petrov Type I satisfies this criterion.  Finally, suppose that two eigenvalues of $\coww$ are equal, $\lambda_2=\lambda_3 \defeq \lambda$, hence Types D or N. In either case, denote the two $\ggL$-orthogonal 2-plane eigenvectors, guaranteed by Proposition \ref{thm:JordanPcoo}, by $\ww{x}{y},\ww{v}{z}$. If Type D, then they are spacelike, and any linear combination $\xi \defeq a(\ww{x}{y})+b(\ww{v}{z})$ of them with $a^2+b^2=1$ will yield another spacelike 2-plane: $\ggL(\xi,\xi) = +1$ (recall \eqref{eqn:Pbasis}). Hence there will be infinitely many complex eigenvectors of $\coww$ in $G_+(p)$, hence infinitely many critical points of $\Wsec$, by Proposition \ref{prop:crit2}. But, as seems not to have been noted by \cite{thorpe}, the same is true for Type N: Although one of these 2-plane eigenvectors, say, $\ww{v}{z}$, is now $\ggL$-lightlike, nevertheless $\ggL(\xi,\xi) = +1$ for all $\xi \defeq \ww{x}{y}+b(\ww{v}{z})$ with $b \in \RR$. Thus, the case of infinitely many critical points determines the Petrov Type only up to D or N. (This would therefore also be the case for the curvature operator of an Einstein metric $\gL$ in the proof of \cite{thorpe}.)
\end{proof}

\section*{Acknowledgments}
This paper is dedicated to John A. Thorpe (1936-2021), whose two beautiful papers \cite{thorpe,thorpe2} have been a source of inspiration for the author.

\section*{References}
\renewcommand*{\bibfont}{\footnotesize}
\printbibliography[heading=none]
\end{document}